\def\NAT@def@citea{\def\@citea{\NAT@separator}}
\theoremstyle{plain}
\newtheorem{teo}{Theorem}
\newtheorem{lema}[teo]{Lemma}
\newtheorem{cor}[teo]{Corollary}
\newtheorem{prop}[teo]{Proposition}
\theoremstyle{definition}
\newtheorem{defi}[teo]{Definition}
\newtheorem{ejem}[teo]{Example}
\theoremstyle{remark}
\newtheorem{nota}{Remark}
\newcommand{\0}{\mathbf{0}}
\newcommand{\bbar}{\overline}
\newcommand{\C}{\mathbb{C}}
\newcommand{\ceil}[1]{\mathop{\lceil #1 \rceil}}
\newcommand{\dd}{\mathrm{d}}
\newcommand{\diag}{\mathrm{diag}}
\newcommand{\disc}{\mathrm{disc}}
\newcommand{\E}{\mathbb{E}}
\newcommand{\HH}{\mathbb{H}}
\newcommand{\I}{\mathbf{i}}
\newcommand{\inv}{^{-1}}
\newcommand{\J}{\mathbf{j}}
\newcommand{\K}{\mathbf{k}}
\newcommand{\mean}[1]{M( #1)}
\newcommand{\R}{\mathbb{R}}
\newcommand{\SF}{\mathbb{S}}
\newcommand{\Trace}{\mathop{\mathrm{Trace}}}
\newcommand{\U}{\mathbf{u}}
\newcommand{\V}{\mathbf{v}}
\newcommand{\vol}{\mathrm{Vol}}
\newcommand{\vv}{\vert}
\newcommand{\W}{\mathbf{w}}
\begin{document}


\title[Rayleigh quotient and left eigenvalues]{Rayleigh  quotient  and left eigenvalues of  quaternionic matrices}

\author[E.~Mac\'ias-Virg\'os, M.J.~Pereira-S\'aez, Ana D.~Tarr\'io-Tobar]{
	E.~Mac\'ias-Virg\'os,
	\address{Instituto de Matem\'aticas, Universidade de Santiago de 			Compostela, 15782-Spain}
	M.J.~Pereira-S\'aez
	\address{Facultade de Econom\'{\i}a e Empresa, Universidade da 			Coru\~na, 15071-Spain}
	\and
	Ana D.~Tarr\'io-Tobar
	\address{E.U. Arquitectura T\'ecnica, Universidade de A Coru\~na, 15008-		Spain}
	}

	\thanks{$^\ast$
	{\tt quique.macias@usc.es, maria.jose.pereira@udc.es, {ana.dorotea.tarrio.tobar@udc.es}}
	}
	
\begin{abstract}  

We study the Rayleigh quotient of a Hermitian matrix with quaternionic coefficients and prove its main properties.
As an application,
we give some relationships between left and right eigenvalues of Hermitian and symplectic matrices. \end{abstract}

\maketitle




\section{Introduction}
The Rayleigh  quotient  of a matrix, introduced by the British physicist Lord Rayleigh in 1904 in his book   ``The theory of sound'', is a well known tool  which is widely used to obtain estimates of  the eigenvalues of real and complex matrices (\cite{HORN,BHATIA}).

For  quaternionic matrices, however,  only a few references about the Rayleigh  quotient  can be found in the literature, and there is a lack of a general exposition of its properties and main results.
Since  quaternions have many applications, among them in quantum
mechanics, solid body rotations, and signal theory
(\cite{BADENSKA}), it seems useful to fill that gap. Notice that most of the results and proofs are analogous to the complex case, but they refer to {\em right} eigenvalues.

On the other hand, very little is known about {\em left} eigenvalues of  quaternionic matrices. Wood (\cite{WOOD}) proved  that every   quaternionic matrix has
at least one left eigenvalue. Huang and So {(\cite{HUANG-SO})} completely solved the case of $2\times 2$ matrices. The authors (\cite{MP2009,MP2010}) applied Huang and So's results to $2\times 2$ symplectic matrices. {The case $n=3$ was studied by So (\cite {SO}) and the authors (\cite{MP2014})}. Finally, Zhang (\cite{ZHANG2007}) and Farid, Wang and Zhang (\cite{FARID}) gave several Ger\v{s}gorin type theorems for   quaternionic matrices.

Consequently, many problems still remain open, in particular those about the relationship between left and right eigenvalues. In this paper we give some partial answers to this question, for Hermitian and symplectic matrices, as an application of the previously proved properties of the Rayleigh  {quotient.}

The contents of the paper are as follows. In Section \ref{SECTPRELIM} we {present} some preliminaries about the right eigenvalues of a  quaternionic matrix. 

In Section \ref{SECTquotient}, we consider a Hermitian  quaternionic $n\times n$ matrix $S$, and we define its Rayleigh  quotient  $h_S$ as a real function defined on the sphere $S^{4n-1}$. We compute the gradient, the Hessian  and the mean value of $h_S$, and we prove its main properties, among
them the min-max principle for eigenvalues (Section \ref{MINMAXSECT}).  

In Section \ref{SECTLEFT}, we introduce left eigenvalues and we study the case $n=2$ with some detail, as a testing bench for later results. {For an arbitrary Hermitian matrix $S$, our main result (Theorem \ref{HERMITMAIN}) is that  the real part of any left eigenvalue $\lambda$  is bounded by the right eigenvalues, in a way that depends on the dimension of the $\lambda$-eigenspace.} 
{As we shall see, this implies that the existence of left eigenvalues with a {high-dimensional} space of eigenvectors depends on the mutiplicity of the right eigenvalues.}

Finally, in Section \ref{SECTSYMPL} we state similar results for symplectic matrices.

{Our results suggest that there are still many more hidden relations between left and right eigenvalues.}


\section{Preliminaries}\label{SECTPRELIM}

As a general reference for  quaternionic linear algebra we take Rodman's book \cite{RODMAN}. For a brief survey on {quaternions} and matrices of  quaternions, see Zhang's paper \cite{ZHANG1997}.

\subsection{Basic notions}
We denote by $\HH$ the non-commutative algebra of  quaternions. For the  quaternion $ q\in\HH$ we {denote} its conjugate by $\bbar  q$, its norm by {$\vv q \vv$} and its real part by $\Re( q)$.

Let $\HH^{n\times n}$ be the space of $n\times n$ matrices with  quaternionic coefficients. If $M\in\HH^{n\times n}$, we denote by $M^*$ its conjugate transpose {$(\bbar M)^T$}. The  quaternionic space of $n$-tuples {$\U=(u_1,\dots,u_n)^T$}, with $u_i\in\HH$, will be denoted by $\HH^n$. We shall always consider it as a {\em right} vector space over $\HH$, endowed with the {\em Hermitian} product $\langle \U,\V\rangle =\U^*\V$. Notice that $\vert \U \vert^2 =\langle \U,\U\rangle $ is the Euclidean norm in $\R^{4n}$, so the {\em scalar} product is 
$\U\cdot \V=
\Re \langle \U,\V\rangle$.

The matrix {$S\in\HH^{n\times n}$} is {\em Hermitian} if it  is self-adjoint for the Hermitian product, that is, $\langle S\U,\V\rangle =\langle \U,S\V\rangle $, or equivalently, $S^*=S$. The matrix {$A\in\HH^{n\times n}$} is {\em symplectic} if the associated linear map preserves the Hermitian product, that is, $\langle A\U,A\V\rangle =\langle \U,\V\rangle $, or equivalently, $A^*A=AA^*=I_n$.

Two  quaternions $q$ and $q'$ are {\em similar} if there exists some $ r \in \HH$, $ r \neq  0$, such that $q'= r q  r\inv$. Equivalently, they have the same norm and the same real part, that is, $\vert q \vert=\vert  q' \vert$ and  $\Re(q)=\Re(q')$ (\cite[Theorem 2.2]{ZHANG1997}).  As a consequence, any  quaternion $q$ is similar to a complex {number}, namely $z=t+s\,\I$, where $t=\Re(q)$ and $t^2+s^2=\vert q \vert^2$. Notice that {$z\in\C$} and its conjugate $\bbar z$ are similar  quaternions.

{Finally, we shall need the following result, which can be proved by a direct computation
\begin{lema}\label{CONMUT}{Let $\omega,\omega^\prime$ be two  quaternions such that $\vv \omega \vv=\vv \omega^\prime \vv=1$
and $\Re(\omega)=\Re(\omega^\prime)=0$. If $\omega\omega^\prime=\omega^\prime \omega$ then $\omega=\pm\omega^\prime$.}
\end{lema}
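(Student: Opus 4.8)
The plan is to pass to the standard identification of a purely imaginary quaternion $a\I+b\J+c\K$ with the vector $(a,b,c)\in\R^3$. Since by hypothesis both $\omega$ and $\omega'$ have zero real part and unit norm, they correspond under this identification to two \emph{unit} vectors, say $u$ and $v$, in $\R^3$. The whole statement then becomes an assertion about such vectors, which can be settled by the elementary multiplication rules $\I^2=\J^2=\K^2=-1$, $\I\J=\K$, $\J\K=\I$, $\K\I=\J$.

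First I would record (or verify by the direct coordinatewise expansion the author alludes to) the product formula for purely imaginary quaternions: if $\omega,\omega'$ correspond to $u,v$, then $\omega\omega'=-(u\cdot v)+(u\times v)$, where $u\cdot v$ is the Euclidean scalar product and $u\times v$ is the cross product, read again as a purely imaginary quaternion. The decisive structural feature is that the real part $-(u\cdot v)$ is \emph{symmetric} in $u,v$, whereas the cross product is \emph{antisymmetric}, $v\times u=-(u\times v)$.

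The key step is then immediate: subtracting the two products gives $\omega\omega'-\omega'\omega=2\,(u\times v)$, so the commutation hypothesis $\omega\omega'=\omega'\omega$ is equivalent to $u\times v=\0$, that is, to the linear dependence of $u$ and $v$ in $\R^3$. Consequently $v=\mu u$ for some real scalar $\mu$, and comparing norms, $\vv u\vv=\vv v\vv=1$, forces $\mu=\pm1$, i.e. $\omega=\pm\omega'$, as claimed.

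The argument is entirely routine, and I do not expect a genuine obstacle; the only point demanding a little care is the correct sign bookkeeping in the product formula (equivalently, keeping track of the off-diagonal terms in the direct expansion of $\omega\omega'$ and $\omega'\omega$), after which the reduction to $u\times v=\0$ and the norm comparison close the proof at once.
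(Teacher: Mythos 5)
Your proof is correct: the product formula $\omega\omega'=-(u\cdot v)+u\times v$ for pure quaternions does give $\omega\omega'-\omega'\omega=2\,(u\times v)$, so commutation forces linear dependence, and the unit-norm comparison yields $\omega=\pm\omega'$. The paper offers no written proof, stating only that the lemma ``can be proved by a direct computation,'' and your dot/cross-product packaging is precisely that computation in organized form, so you match the paper's intended approach.
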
}

\subsection{Right eigenvalues}

The theory of right eigenvalues is well known, and has many properties in common with the complex case.

\begin{defi}
The  quaternion $q\in \HH$ is a {\em right eigenvalue} of the  matrix $M\in\HH^{n\times n}$ if there exists some {vector} $\U\in\mathbb{H}^n$, $\U\neq \0$, such that $M\U=\U q$.  
\end{defi}

Notice  that the {eigenvectors associated to  a right eigenvalue do not form a  vector subspace}. Instead,
right eigenvalues are organized in similarity classes.

\begin{prop}\label{CHANGESIMIL}Let  $q$ be a {right eigenvalue of $M\in \HH^{n\times n}$,} and {let $\U$  be} a  $q$-eigenvector. If  $r\in \HH$ is a non-zero  quaternion,  then $rq r\inv$ is {also a right eigenvalue of $M$,} and  $\U r\inv$ is an $rq r\inv$-eigenvector.
\end{prop}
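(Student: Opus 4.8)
The plan is to verify the claim by direct substitution into the eigenvalue equation, which is the natural approach since the statement is essentially a computation exploiting how conjugation interacts with right multiplication. The starting hypothesis is that $M\U=\U q$ with $\U\neq\0$, and I want to show that $\V=\U r\inv$ satisfies $M\V=\V(rqr\inv)$ and is nonzero.

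First I would check that $\V=\U r\inv$ is a genuine eigenvector candidate, i.e.\ that it is nonzero: since $r\neq 0$ it is invertible in $\HH$, so $\U r\inv=\0$ would force $\U=\0$, contradicting the assumption that $\U$ is an eigenvector. Next I would carry out the main computation. Applying $M$ and using that $M$ is $\HH$-linear on the right (matrix multiplication commutes with scalar multiplication on the right, which is exactly what makes $\HH^n$ a right vector space), I get
\[
M\V=M(\U r\inv)=(M\U)r\inv=(\U q)r\inv.
\]
On the other hand,
\[
\V(rqr\inv)=(\U r\inv)(rqr\inv)=\U(r\inv r)qr\inv=\U q r\inv.
\]
The two right-hand sides coincide, so $M\V=\V(rqr\inv)$, which is precisely the statement that $rqr\inv$ is a right eigenvalue of $M$ with eigenvector $\U r\inv$.

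The only subtlety worth flagging, and the one point that must be handled carefully because of non-commutativity, is the bookkeeping of the order of multiplication: one must consistently multiply by $r\inv$ on the \emph{right} and never silently reorder quaternionic factors. The cancellation $r\inv r=1$ is legitimate, but moving $q$ past $r$ or $r\inv$ is not, so the proof works precisely because the $r\inv$ and $r$ meet in the middle and annihilate each other, leaving $\U q r\inv$ on both sides. There is no real obstacle here; the lemma is a routine verification, and the content is simply that right eigenvalues are a similarity-invariant notion, with the conjugating quaternion acting on eigenvectors by right multiplication by its inverse.
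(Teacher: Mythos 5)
Your proof is correct and follows essentially the same route as the paper, whose entire argument is the one-line computation $M(\U r\inv)=(\U q)r\inv=\U r\inv(rqr\inv)$. You merely spell out the nonvanishing of $\U r\inv$ and the cancellation $r\inv r=1$, which the paper leaves implicit.
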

\begin{proof}
$M(\U r\inv )= (\U q) r\inv= \U r\inv (rq r\inv)$.
\end{proof}

So, the computation of the right eigenvalues of the matrix $M$ is reduced to compute the complex representatives of their similarity classes. This can be done as follows.
Each  quaternion $ q$ can be written in a unique form as $ q =u+\J v$, with $u,v\in \C$ complex numbers. Then the matrix $M\in \HH^{n\times n}$ decomposes as $M=U+\J V$, with $U,V\in \C^{n \times n}$ complex matrices. We define the associated complex matrix 
$$c(M)=\begin{bmatrix} U&-\bbar V\,\cr V\,& \bbar U\cr\end{bmatrix}.$$

It is straightforward to verify that he map $c\colon \HH^{n\times n}\to \C^{2n \times 2n}$ is an injective morphism of $\R$-algebras and satisfies $c(M^*)=c(M)^*$. 

\begin{prop} The right eigenvalues of $M$ are grouped in $n$ similarity classes $[z_1],\dots,[z_n]$. The complex representatives $z_1,\bbar z_1,\dots, z_n,\bbar z_n$ are the eigenvalues of the complex matrix $c(M)$.
\end{prop}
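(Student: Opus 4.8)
The plan is to transfer the right-eigenvalue equation $M\U=\U z$ for a complex $z$ into an ordinary complex eigenvalue equation for $c(M)$, and then read off the structure of the spectrum. First I would introduce the real-linear bijection $\phi\colon\HH^n\to\C^{2n}$ sending $\U=x+\J y$ (with $x,y\in\C^n$) to the stacked vector $(x,y)^T$; this is the vector-level analogue of the matrix map $c$. The two identities I expect to carry the argument are the intertwinings $\phi(M\U)=c(M)\,\phi(\U)$ and $\phi(\U z)=z\,\phi(\U)$ for every $z\in\C$. Both are direct, the only subtlety being the systematic use of the relation $\J w=\bbar w\,\J$ for $w\in\C$ when pushing the unit $\J$ past complex scalars and matrices; this is exactly where the $-\bbar V$ and $\bbar U$ blocks of $c(M)$ are forced to appear.

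Granting these identities, and noting that $\phi$ is injective, I would conclude that $z\in\C$ is a right eigenvalue of $M$ (with eigenvector $\U$) if and only if $z$ is an eigenvalue of $c(M)$ (with eigenvector $\phi(\U)$). By Proposition \ref{CHANGESIMIL} together with the remark that every quaternion is similar to a complex number $z=t+s\,\I$, every right eigenvalue of $M$ lies in the similarity class of such a complex $z$, and both $z$ and $\bbar z$ are themselves right eigenvalues; hence the complex representatives of the similarity classes of right eigenvalues are precisely the eigenvalues of $c(M)$.

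It remains to organize the $2n$ eigenvalues of $c(M)$ into $n$ conjugate pairs. For this I would exhibit the symmetry $J_0\,\overline{c(M)}\,J_0\inv=c(M)$, where $J_0=\left[\begin{smallmatrix}0&-I_n\\ I_n&0\end{smallmatrix}\right]$; a short block computation (again using only the shape of $c(M)$) verifies it. Consequently $c(M)$ and $\overline{c(M)}$ are similar, so the characteristic polynomial of $c(M)$ equals that of $\overline{c(M)}$ and therefore has real coefficients, which forces the spectrum to be invariant under conjugation. Writing the antilinear map $\mathcal{J}(w)=J_0\,\bbar w$, one checks $\mathcal{J}^2=-I$ and that $\mathcal{J}$ commutes with $c(M)$, so each generalized eigenspace attached to a real eigenvalue carries a quaternionic structure and is thus even-dimensional; combined with the conjugation symmetry this lets me list the eigenvalues as $z_1,\bbar z_1,\dots,z_n,\bbar z_n$, yielding exactly the $n$ classes $[z_1],\dots,[z_n]$.

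The main obstacle I anticipate is not conceptual but bookkeeping: getting every conjugate and sign correct in $\phi(M\U)=c(M)\,\phi(\U)$ under quaternionic noncommutativity, and then justifying the even multiplicity of real eigenvalues (via the quaternionic structure $\mathcal{J}^2=-I$) so that the count of similarity classes is genuinely $n$, rather than merely a conjugation-symmetric list of $2n$ eigenvalues.
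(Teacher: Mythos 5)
Your proof is correct, but note that the paper itself does not prove this proposition at all: it states it right after defining $c(M)$ and simply defers to Rodman \cite[Theorem 5.5.3]{RODMAN}, so your argument supplies the details that the paper outsources to a citation. What you wrote is the standard self-contained proof and is consistent with the paper's conventions: with $\U=x+\J y$ and the rule $\J w=\bbar w\,\J$, the intertwinings $\phi(M\U)=c(M)\phi(\U)$ and $\phi(\U z)=z\,\phi(\U)$ do check out (e.g. $M\U=(Ux-\bbar V y)+\J(Vx+\bbar U y)$, which is exactly why the blocks $-\bbar V$, $\bbar U$ appear), so a complex $z$ is a right eigenvalue of $M$ iff it is an eigenvalue of $c(M)$, and Proposition \ref{CHANGESIMIL} together with the similarity of every quaternion to some $z=t+s\,\I$ (and of $z$ to $\bbar z$) reduces the general case to complex representatives. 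You also correctly identified the one step that is genuinely easy to overlook: conjugation symmetry of the characteristic polynomial (from $J_0\,\overline{c(M)}\,J_0\inv=c(M)$, which a block computation confirms) does not by itself give the pairing $z_1,\bbar z_1,\dots,z_n,\bbar z_n$, since a real eigenvalue could a priori occur with odd multiplicity; your antilinear $\mathcal{J}(w)=J_0\bbar w$, satisfying $\mathcal{J}^2=-I$ and commuting with $c(M)-tI$ for real $t$, endows each generalized eigenspace of a real eigenvalue with a quaternionic structure, forcing even dimension and hence exactly $n$ similarity classes. This is precisely the argument found in the references the paper leans on, so your proposal closes the gap the paper leaves open rather than diverging from it.
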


See  \cite[Theorem 5.5.3]{RODMAN} for a discussion of the Jordan form of $M$.

\begin{ejem}\label{EXRIGHT}Let the matrix $M=\begin{bmatrix}
0\ &\J \cr 
\I  & 0\cr
\end{bmatrix}$.
The eigenvalues of $c(M)$ are $z_1=\frac{1}{\sqrt{2}}(1+\I)$, $z_2=\frac{1}{\sqrt{2}}(-1+\I)$ and their conjugates $\bbar z_1$,$\bbar z_2$. Then, the right eigenvalues of $M$ are all the  quaternions $q$  such that $\Re(q)=\pm 1/\sqrt{2}$ and $\vert q\vert =1$.
\end{ejem}

Notice that, unlike the usual complex case, the matrices $M-q I$ in the latter example  are  invertible, for all right eigenvalues $q$. This can be easily seen by computing their kernel. This leads to the notion of {\em left} eigenvalue, {as a  quaternion $\lambda\in\HH$ such that $M-\lambda I$ is not invertible (see Section \ref{SECTLEFT})}.


\section{The Rayleigh  quotient }\label{SECTquotient}

{The well known Rayleigh  quotient   for complex matrices can be generalized to matrices of  quaternions. We now focus on Hermitian matrices.
}

\subsection{Definition and first properties}
Recall that the $n\times n$  quaternion  matrix $S$ is {\em Hermitian} if $S=S\sp*$.
As it is well known, any {\em right} eigenvalue $q$ of $S$  is real: in fact, if $S\U=\U q$ then
 $$\vert \U\vert^2q=\U^*\U q= \U^*S\U={(\U^*S\U)^*}$$ is real,
 hence $\bbar q=q$. Moreover, $S$ is diagonalizable \cite[Theorem 5.3.6]{RODMAN}. Let $ t_1\leq\dots\leq t_n$ be {the   eigenvalues} of $S$, and 
let $\U_1,\dots,\U_n$ be an orthonormal basis of eigenvectors. Then $S$ diagonalizes
as $S=U\diag[ t_1,\dots, t_n]U^*$,
where $U$ is a symplectic matrix whose columns are the $\U_j$'s. 



\begin{defi}If $S$ is a Hermitian $n\times n$ matrix, and $\V\in \HH^n$ is a vector, $\V\neq \0$, the {\em Rayleigh  quotient } is the real number
$$R(S,\V)=\frac{\V^*S\V}{\vert \V\vert^2}.$$
\end{defi}
\begin{prop}
 If $ t\in\R$ is an eigenvalue of $S$, and  {$\U\in\HH^n$} is a  $ t$-eigenvector, then
$R(S,\U)= t$.
\end{prop}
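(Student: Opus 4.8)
The plan is to substitute the right-eigenvalue relation directly into the definition of the Rayleigh quotient and simplify. Since $\U$ is a $t$-eigenvector, the defining equation (in the right-eigenvalue convention of Section \ref{SECTPRELIM}, namely $M\U=\U q$) reads $S\U=\U t$. Feeding this into the numerator gives $\U^*S\U=\U^*(\U t)$, and the whole argument reduces to evaluating this single expression and dividing by $\vert\U\vert^2$.

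The one point that requires any care is the non-commutativity of $\HH$: in general a quaternion cannot be moved across the product $\U^*\U$. Here, however, the scalar $t$ is \emph{real}. This was established at the start of the section, where it was shown that every right eigenvalue of a Hermitian matrix is real, by the computation $\vert\U\vert^2 q=\U^*S\U=(\U^*S\U)^*$. A real scalar is central in $\HH$, so it commutes with every quaternionic entry. Concretely, writing $\U=(u_1,\dots,u_n)^T$, one has
$$\U^*(\U t)=\sum_i \bbar u_i(u_i t)=\Big(\sum_i \bbar u_i u_i\Big)t=\vert\U\vert^2 t,$$
where each $\bbar u_i u_i=\vert u_i\vert^2$ is real; this factorization is the only place the hypothesis that $t$ is real is actually used.

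Dividing by $\vert\U\vert^2$ then yields $R(S,\U)=t$ at once. I do not expect any genuine obstacle: the entire proof is a one-line chain of equalities, and the sole subtlety is confirming that the centrality of the real eigenvalue $t$ licenses pulling it out of the Hermitian product. I would therefore present it as a single displayed computation, with a one-phrase reminder that $t\in\R$ justifies the middle step.
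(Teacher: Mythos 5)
Your proof is correct and is essentially the paper's own justification: the paper states this proposition without a separate proof precisely because the computation $\U^*S\U=\U^*\U t=\vert\U\vert^2 t$ was already displayed in the paragraph immediately above (where it shows that right eigenvalues of a Hermitian matrix are real), and your chain of equalities is that same computation. One micro-correction: the factorization $\sum_i \bbar u_i(u_i t)=\bigl(\sum_i \bbar u_i u_i\bigr)t$ needs only associativity and distributivity since $t$ stays on the right, so the reality of $t$ is not what licenses that step; it is what makes the quotient a \emph{real} number equal to $t$, as the definition of $R(S,\U)$ requires.
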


If $\V\in\HH^n$ is a vector, $\V\neq \0$, {we can} write it in coordinates with respect to the orthonormal basis $\{\U_j\}_{j=1,\dots,n}$ as
$$\V={\sum_{j=1}^n \U_jx_j},  \quad  x_j\in\HH.$$

\begin{prop}\label{WEIGHT}
The Rayleigh  quotient  equals the weighted mean
$$R(S,\V)=\frac{\sum_j  t_j\vert x_j\vert^2}{\sum_j \vert x_j\vert^2},$$
where {the real numbers $ t_j$ are the eigenvalues of $S$}. 
\end{prop}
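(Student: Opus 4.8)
The plan is to expand both the numerator $\V^*S\V$ and the denominator $\vert\V\vert^2$ using the coordinate expression $\V=\sum_{j} \U_j x_j$ and then exploit two facts already established: that $\{\U_j\}$ is orthonormal, and that each eigenvalue $t_j$ is \emph{real}. The reality of the $t_j$ is the linchpin, because it is the only thing that lets us commute the scalars freely in a noncommutative setting.

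First I would handle the denominator. Writing $\V^*=\sum_{i}\bbar{x_i}\,\U_i^*$ (note $(\U_i x_i)^*=\bbar{x_i}\U_i^*$, since $x_i$ is a scalar quaternion), I compute
$$\vv\V\vv^2=\V^*\V=\sum_{i,j}\bbar{x_i}\,\U_i^*\U_j\,x_j=\sum_{i,j}\bbar{x_i}\,\delta_{ij}\,x_j=\sum_{j}\bbar{x_j}x_j=\sum_{j}\vv x_j\vv^2,$$
where the middle equality uses $\U_i^*\U_j=\langle\U_i,\U_j\rangle=\delta_{ij}$ from orthonormality.

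Next I would compute the numerator. From the eigenvalue relations $S\U_j=\U_j t_j$ I get $S\V=\sum_j \U_j t_j x_j$, and therefore
$$\V^*S\V=\sum_{i,j}\bbar{x_i}\,\U_i^*\U_j\,t_j x_j=\sum_{j}\bbar{x_j}\,t_j\,x_j.$$
Here is the one place where noncommutativity could bite: the term $\bbar{x_j}\,t_j\,x_j$ is a product of three quaternions in a fixed order. The resolution is that $t_j\in\R$ is central in $\HH$, so it commutes past $\bbar{x_j}$, giving $\bbar{x_j}t_j x_j=t_j\,\bbar{x_j}x_j=t_j\vv x_j\vv^2$. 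Hence $\V^*S\V=\sum_j t_j\vv x_j\vv^2$, and dividing by the denominator computed above yields the claimed weighted mean. I do not anticipate any genuine obstacle; the only subtlety is keeping the order of the quaternionic factors correct until the reality of $t_j$ is invoked to collapse $\bbar{x_j}t_j x_j$ into $t_j\vv x_j\vv^2$.
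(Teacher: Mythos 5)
Your proof is correct and follows essentially the same route as the paper: expand $\V$ in the orthonormal eigenbasis, use $\U_i^*\U_j=\delta_{ij}$ and $S\U_j=\U_j t_j$ to reduce both $\vv\V\vv^2$ and $\V^*S\V$ to sums of terms $\bbar{x_j}x_j$ and $\bbar{x_j}t_jx_j$, then invoke the reality of $t_j$ to collapse the latter to $t_j\vv x_j\vv^2$. Your explicit remark that centrality of the real $t_j$ is what tames noncommutativity is a point the paper uses only implicitly, but the argument is identical in substance.
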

\begin{proof}
{Since $\U_i^*\U_j=\delta_{ij}$}, we have
$$\vert \V \vert^2=\V^*\V=(\sum_i \bbar x_i \U_i^*)(\sum_j \U_jx_j)=\sum_{i,j}\bbar x_i\U_i^*\U_jx_j=\sum_j\bbar x_j x_j=\sum_j\vv x_j\vv^2.$$
Analogously, since $S\U_j=\U_j t_j$,
$$\V^*S\V=(\sum_i\bbar x_i \U_i^*)(\sum_j\U_j t_jx_j)=
\sum_j\bbar x_j t_jx_j=\sum_j t_j \vv x_j\vv^2. \qedhere$$
\end{proof}
By diagonalization, we have reduced the problem of an arbitrary bilinear form to the corresponding quadratic form.
As a consequence we have:
\begin{prop}\label{MINMAXPROP}{The minimum value of the function $R(S,\V)$, defined in $\HH^n\setminus{\{\0\}}$,  is the lowest {eigenvalue of $S$}.  The
maximum value {is} the {highest eigenvalue of $S$}}.
\end{prop}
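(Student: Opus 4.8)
The plan is to deduce the statement directly from the weighted-mean formula of Proposition~\ref{WEIGHT}. Writing $\V=\sum_{j=1}^n \U_j x_j$ in the orthonormal eigenbasis, that proposition gives
$$R(S,\V)=\frac{\sum_j t_j\vv x_j\vv^2}{\sum_j\vv x_j\vv^2},$$
so $R(S,\V)$ is a convex combination of the real eigenvalues $t_1\le\dots\le t_n$ with weights $w_j=\vv x_j\vv^2/\sum_k\vv x_k\vv^2$. First I would note that these weights are nonnegative and sum to $1$, since each $\vv x_j\vv^2\ge 0$ and the hypothesis $\V\neq\0$ guarantees the denominator is strictly positive.

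Then I would bound this convex combination from both sides. Replacing every $t_j$ by the smallest eigenvalue $t_1$ can only decrease the numerator, and replacing every $t_j$ by the largest eigenvalue $t_n$ can only increase it; since the weights sum to $1$, this yields $t_1\le R(S,\V)\le t_n$ for all $\V\neq\0$. Finally I would verify that both bounds are attained: taking $\V=\U_1$ forces $x_1=1$ and $x_j=0$ for $j\neq 1$, whence $R(S,\U_1)=t_1$, and likewise $R(S,\U_n)=t_n$. This identifies $t_1$ as the minimum value and $t_n$ as the maximum value of the function.

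Since every step is an elementary estimate on a convex combination, I do not expect a genuine obstacle here: the diagonalization carried out in Proposition~\ref{WEIGHT} has already absorbed all of the noncommutative difficulty of the quaternionic setting, reducing the claim to the classical real-variable fact that a weighted average lies between its extreme values. The only point deserving a word of care is that the ordering $t_1\le\dots\le t_n$ is what lets us decide which of the two attained values is the minimum and which is the maximum.
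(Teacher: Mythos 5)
Your proof is correct and follows essentially the same route as the paper: both deduce the result from Proposition~\ref{WEIGHT} by viewing $R(S,\V)$ as a convex combination of the eigenvalues, which must lie between the extreme values $t_1$ and $t_n$. The only cosmetic difference is that you carry out the elementary bound and the attainment check ($\V=\U_1$, $\V=\U_n$) by hand, whereas the paper cites this fact about convex combinations from the literature.
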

\begin{proof}
First, notice that $R(S,\V/\vert\V\vert)=R(S,\V)$, so we can assume that $\vert \V \vert=1$. It is known \cite[Appendix B]{HORN} that a convex function
$$\sum\limits_{j=1}^n  t_j s_j,   \quad {\text{with\ }}\sum\limits_{j=1}^n s_j=1, s_j\geq 0,$$
attains its maximum value at $ t_n=\max  t_j$ and its minimum value at $ t_1=\min  t_j$. {By taking into account Proposition \ref{WEIGHT}, the result follows}.
\end{proof}
In fact, {we shall  compute} all the critical values of the function. This is a variational characterization of eigenvalues, which was discovered in {connection} with problems of physics \cite{HORN,BHATIA}. 

\begin{prop}\label{CRITICAL}The critical values of the function $R(S,\V)$ are the {right} eigenvalues $ t_j$ of $S$. The index of $ t_j$ equals $\sum\nolimits_{i<j} l_i$, where $l_i$ is the multiplicity of $ t_i$. 
\end{prop}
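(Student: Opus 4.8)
The plan is to reduce everything to the diagonalized weighted-mean form of Proposition \ref{WEIGHT} and then to the classical theory of the Rayleigh quotient of a real symmetric form. Since $R(S,\V)$ is homogeneous of degree $0$, I would first restrict it to the unit sphere $S^{4n-1}\subset\HH^n\cong\R^{4n}$, where it becomes the function $h_S$. Writing $\V=\sum_j\U_jx_j$ in the orthonormal eigenbasis, Proposition \ref{WEIGHT} gives $h_S(\V)=\sum_j t_j\vv x_j\vv^2$ subject to $\sum_j\vv x_j\vv^2=1$. Because each $\vv x_j\vv^2$ is a sum of four real squares, this is \emph{literally} the Rayleigh quotient of the real symmetric diagonal form on $\R^{4n}$ whose spectrum consists of each distinct eigenvalue $t_j$ repeated with real multiplicity $4l_j$; so from here on the problem is a careful translation of the familiar real case.

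To locate the critical points and values I would apply Lagrange multipliers to $\sum_i t_i\vv x_i\vv^2-\lambda\sum_i\vv x_i\vv^2$. Differentiating in the real coordinates of each $x_i$ yields $(t_i-\lambda)x_i=\0$ for every $i$, so at a critical point $x_i=\0$ whenever $t_i\neq\lambda$. Hence the multiplier $\lambda$ must equal one of the $t_j$, the critical point lies in the corresponding eigenspace, and the critical value is exactly $t_j$. This recovers the fact that the critical values are precisely the right eigenvalues.

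For the index I would fix a critical value $t_j$ and a unit critical vector in its eigenspace, and expand $h_S$ to second order along the sphere. Using the constraint $\sum_i\vv x_i\vv^2=1$ to eliminate the radial direction, the expansion takes the form $h_S-t_j=\sum_{i:\,t_i\neq t_j}(t_i-t_j)\vv x_i\vv^2$, so the reduced Hessian lives on the normal space $\bigoplus_{t_i\neq t_j}\ker(S-t_iI)$ and is the quadratic form displayed there. Its negative part is spanned by the eigenspaces with $t_i<t_j$, whose quaternionic dimension is $\sum_{i<j}l_i$; this is the asserted index.

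The main obstacle is that this Hessian is genuinely degenerate: the directions along the critical vector's own sphere and along the entire critical submanifold $S^{4l_j-1}$ lie in its kernel, so $h_S$ is never Morse but only Morse--Bott. The delicate point is therefore to isolate the nondegenerate transverse directions cleanly and, above all, to count them in the correct units: the naive real Morse index of the transverse form is $4\sum_{i<j}l_i$, and one must exploit the quaternionic structure of the normal space (equivalently, divide out the fixed factor $4$, or count eigenvalues with quaternionic multiplicity rather than real dimensions) in order to land on the stated value $\sum_{i<j}l_i$.
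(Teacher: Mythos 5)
Your proof is correct, but it follows exactly the route the paper explicitly declines to take: the paper's proof opens by saying that ``even though it is possible to do a direct computation by using coordinates, we shall give a more synthetic proof.'' The paper differentiates $R(S,\V)$ on $\HH^n\setminus\{\0\}$ without choosing an eigenbasis, obtains the gradient $G_\V=\frac{2}{\vv \V \vv^2}(S\V-R(S,\V)\V)$, checks that $G_\V\perp\V$ so that critical points of the restriction $h$ to $S^{4n-1}$ are exactly the eigenvectors with $t=R(S,\V)$, and then differentiates once more to identify the Hessian with the shifted operator $\frac{2}{\vv\V\vv^2}(S-tI)$, whose eigenvalues $2(t_k-t)$ give the index by counting the negative ones. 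You instead diagonalize first via Proposition \ref{WEIGHT}, run Lagrange multipliers to get $(t_i-\lambda)x_i=\0$, and read the index off the exact identity $h_S-t_j=\sum_{t_i\neq t_j}(t_i-t_j)\vv x_i\vv^2$; both arguments are valid, and your Lagrange computation is the quaternionic-Hermitian analogue of what the synthetic gradient formula encodes. What each buys: the paper's route produces the gradient and the identification of the Hessian as $2(S-tI)$ as intrinsic objects (reused later in the paper), while your coordinate route is more elementary and makes explicit two points the paper passes over in silence --- that $h_S$ is only Morse--Bott, with kernel along the critical submanifold $S^{4l_j-1}$, and that the stated index $\sum_{i<j}l_i$ is a count in quaternionic units, the genuine real Morse index being $4\sum_{i<j}l_i$. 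Your resolution (count negative Hessian eigenvalues with quaternionic multiplicity) is precisely the convention the paper adopts implicitly when it solves $H_\V(\W)=\W\mu$ as a right-eigenvalue problem for the quaternionic matrix $2(S-tI)$, and it is consistent with the paper's remark that the maximum $t_n$ has index $n-l_n$.
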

\begin{proof}
{Even though it is possible to do a direct computation by using coordinates, we shall give a more synthetic proof.}

By differentiating the function {$R=R(S,\V)\colon \HH\setminus \{\0\} \to \R$} we obtain
\begin{align*}
R_{*\V}(\W)&=\frac{1}{\vv \V \vv^4}\left((\W^*S\V+\V^*S\W)\vv\V\vv^2-(\V^*S\V)(\W^*\V+\V^*\W)\right)\\
&=\frac{2}{\vv\V\vv^4}\left(\Re(\V^*S\W)\vv \V \vv^2-(\V^*S\V)\Re(\V^*\W)\right) \\
&=\frac{2}{\vv \V\vv^4}\left((S\V\cdot\W)\vv \V\vv^2-(\V^*S\V)( \V\cdot\W)\right)\\
&=\frac{2}{\vv \V \vv^2}\left(( S\V\cdot \W)-R(S,\V)( \V\cdot \W)\right)\\
&=\frac{2}{\vv \V \vv^2}\left( S\V-R(S,\V)\V\right)\cdot\W,
\end{align*}
{where   $(\cdot)$ represents the scalar product} in $\R^{4n}$.

Hence the gradient of $R(S,\V)$ is
$$G_\V=\frac{2}{\vv \V \vv^2}\left(S\V-R(S,\V)\V\right).$$


{Since   $R(S,\V/\vv\V\vv)=R(S,\V)$}, we can assume that $\vv \V \vv=1$. 
Let us denote by $h$ the restriction of the Rayleigh function to the sphere $S^{4n-1}\subset \HH^n$ of unitary vectors. We check that
$G_\V \perp \V$, because
$${\frac{1}{2}}\langle \V,G_\V \rangle=\V^*(S\V-R(S,\V)\V)=\V^*S\V-R(S,\V)\vv \V \vv^2=0.$$
Hence, $\V\cdot G_\V=\Re\langle \V,G_\V \rangle=0$ and $G_\V$ is tangent to the sphere for the scalar product, so it is also the gradient of the restriction $h$.

It follows that  the point $\V$ is critical (both for the function and its restriction) if and only if $S\V=R(S,\V)\V$,
that is, $\V$ is an eigenvector of the eigenvalue {$R(S,\V)= t\in\R$}.

This will allow {us} to compute the Hessian
\begin{align*}
H_\V(\W)&=(G_\V)_*(\W)\\
& =\frac{2}{\vv \V \vv^4}{\big(S\W-(R_{*\V}(\W)\V+R(S,\V)\W)\vv \V \vv^2}\\
& \quad  - (S\V-R(S,\V)\V)(\W^*\V+\V^*\W) \big) \\
&={\frac{2}{\vv \V \vv^2}}\big(S\W-(R_{*\V}(\W)\V+R(S,\V)\W)\big)\\
&={\frac{2}{\vv \V \vv^2}}\big(S\W-R(S,\V)\W\big)\\
&={
\frac{2}{\vv \V \vv^2}(S\W- t\W)=\frac{2}{\vv \V \vv^2}(S- tI)\W.
}
\end{align*}

Moreover, if $\vv\V\vv=1$  and $\W\in T_\V{S^{4n-1}}$, that is,   $\V\cdot\W= \Re(\V^*\W)=0$, then 
{$$\V\cdot H_\V(\W)=\Re  (\V^*H_\V(\W))=2\Re(\V^*S\W)=2\Re(\W^*S\V)  
=2\W\cdot \V t=0.$$} 
Hence $H_\V(\W)\in T_\V{S^{4n-1}}$,
and  $H_\V(\W)$ is also the Hessian of the restriction $h$.

Now, we compute the index of $ t=R(S,\V)$, which  is the number of negative eigenvalues of the Hessian at the critical point $\V$.  
If  $\mu$ is an eigenvalue of the Hessian, we have  
$H_\V(\W)=\W\mu$, for some $\W\neq \0$,
that is,
$$2(S- t I)\W=\W\mu,$$
so we are looking for the eigenvalues $\mu$ of of the {\em shifted} matrix  {$S- t I$}. Hence, the eigenvalues of the Hessian are $\mu_k= 2(t_k- t)$, {twice} the differences  with the other {right} eigenvalues $ t_k$ of  $S$, and the result follows.
\end{proof}
For instance, the {minimum} $ t_1$ has index $0$. The maximum $ t_n$ has index $n-l_n$.

\subsection{The min-max principle for eigenvalues}\label{MINMAXSECT}
As in the complex case, it is possible to refine Proposition \ref{CRITICAL}. Now, we constrain $\V$ to a $k$-dimensional subspace, in order to obtain a  quaternionic version of the the so-called  min-max  Courant-Fischer-Weyl  theorem (\cite[Theorem 4.2.6]{HORN}).

Fix some $k\in\{1,\dots,n\}$ and let $\E\subset \HH^n$ be any {$\HH$-subspace} of dimension $k$. We shall denote $\E^*=\E\setminus \{\0\}$.

Let $\{\U_1,\dots,\U_n\}$ be again an orthonormal basis of eigenvectors. We have that
$$\E \cap \langle \U_k,\dots \U_n \rangle \neq \0,$$
due to dimension reasons. Then there exists $\V=\sum_{j=k}^n\U_ix_j\in\E^*$, and its Rayleigh  quotient  is
$$R(S,\V)=
\frac{\sum_{j=k}^n t_j\vert x_j\vert^2}{\sum_{j=k}^n\vert x_j\vert^2}\geq   t_k,$$
because $ t_j\geq   t_k$ for all $j\geq  k$.

This implies that
$$M_\E:=\max\{R(S,\V)\colon \V\in \E^*\}\geq   t_k.$$
Since this is true for all $\E$ we conclude that
$\min\{M_\E\colon \dim \E=k\}\geq   t_k$.
This is in fact an equality:
\begin{teo}\label{MINMAXTEO}$ t_k=\min\{M_\E\colon \dim\E=k\}$.
\end{teo}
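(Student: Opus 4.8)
The plan is to establish the reverse inequality $\min\{M_\E\colon \dim\E=k\}\le t_k$, since the excerpt has already proved $\min\{M_\E\colon \dim\E=k\}\ge t_k$. It therefore suffices to exhibit one particular $k$-dimensional subspace $\E_0$ whose maximal Rayleigh quotient $M_{\E_0}$ does not exceed $t_k$; equality will then be forced by the two bounds.

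First I would take $\E_0=\langle \U_1,\dots,\U_k\rangle$, the span of the eigenvectors associated with the $k$ smallest eigenvalues $t_1\le\dots\le t_k$. Any nonzero $\V\in\E_0$ can be written as $\V=\sum_{j=1}^k \U_j x_j$ with $x_j\in\HH$, so by Proposition \ref{WEIGHT} its Rayleigh quotient is the weighted mean $R(S,\V)=(\sum_{j=1}^k t_j\vert x_j\vert^2)/(\sum_{j=1}^k\vert x_j\vert^2)$. Since $t_j\le t_k$ for every $j\le k$, this weighted mean is bounded above by $t_k$, and the value $t_k$ is attained at $\V=\U_k$. Hence $M_{\E_0}=\max\{R(S,\V)\colon \V\in\E_0^*\}=t_k$.

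Combining $M_{\E_0}=t_k$ with the general lower bound yields $t_k\le \min\{M_\E\colon \dim\E=k\}\le M_{\E_0}=t_k$, which forces equality and proves the theorem. The argument is essentially dual to the one already carried out in the excerpt: there the arbitrary subspace was intersected with $\langle \U_k,\dots,\U_n\rangle$ to push the quotient up to at least $t_k$, whereas here an explicit optimal subspace pushes it down to at most $t_k$. I do not expect a genuine obstacle, since diagonalizability of $S$ together with Proposition \ref{WEIGHT} reduces everything to the elementary fact about weighted means of real numbers; the only point needing slight care is confirming that the maximum over $\E_0^*$ is actually realized (by the unit eigenvector $\U_k$), so that $M_{\E_0}$ genuinely equals $t_k$ rather than merely being bounded by it.
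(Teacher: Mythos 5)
Your proposal is correct and follows essentially the same route as the paper: both take $\E=\langle \U_1,\dots,\U_k\rangle$ and use the weighted-mean formula of Proposition \ref{WEIGHT} to bound $R(S,\V)$ above by $t_k$ on that subspace, closing the gap with the previously established lower bound. Your extra remark that the value $t_k$ is attained at $\U_k$ is harmless but not needed, since $M_{\E}\leq t_k$ already forces the minimum over subspaces to equal $t_k$.
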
\begin{proof}
It only remains to prove that $M_\E\leq  t_k$ for some $\E$ with $\dim \E=k$. Take $\E=\langle \U_1,\dots,\U_k\rangle$. Then  for all $\V\in\E^*$ we have
$$R(S,\V)=\frac{\sum_{j=1}^k t_j\vert x_j\vert^2}{\sum_{j=1}^k \vert x_j\vert^2}\leq  t_k,$$
because $ t_j\leq  t_k$ if $j\in\{1,\dots,k\}$.
\end{proof}

Analogously, if we denote $$m_\E{:=}\min\{R(S,\V)\colon \V\in\E^*\},$$
 we have
\begin{cor}\label{MAXMINCOR}
 ${t_{n-k+1}}=\max\{m_\E\colon \dim\E=k\}$.
\end{cor}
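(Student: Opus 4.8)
The plan is to deduce this max-min statement from the min-max Theorem \ref{MINMAXTEO} already in hand, rather than redo the variational analysis from scratch, by applying that theorem to the Hermitian matrix $-S$. First I would record three elementary facts. Since $(-S)^*=-S^*=-S$, the matrix $-S$ is again Hermitian, so Theorem \ref{MINMAXTEO} does apply to it. Its eigenvalues, listed in increasing order, are $-t_n\leq\dots\leq-t_1$, so the $k$-th smallest eigenvalue of $-S$ is precisely $-t_{n-k+1}$. Finally, directly from the definition of the Rayleigh quotient one has $R(-S,\V)=-R(S,\V)$ for every $\V\neq\0$.

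To carry this out, write $m_\E(T)$ and $M_\E(T)$ for the minimum and maximum of $R(T,\V)$ over $\V\in\E^*$, so that $m_\E(S)=m_\E$ and $M_\E(S)=M_\E$ in the notation of the corollary. Because negation turns a maximum into a minimum, the identity $R(-S,\V)=-R(S,\V)$ gives, for each fixed $\E$,
$$m_\E(S)=\min\{R(S,\V)\colon\V\in\E^*\}=-\max\{R(-S,\V)\colon\V\in\E^*\}=-M_\E(-S).$$
Taking the maximum over all $k$-dimensional subspaces $\E$ and applying the same sign-induced swap once more yields
$$\max\{m_\E(S)\colon\dim\E=k\}=-\min\{M_\E(-S)\colon\dim\E=k\}.$$
Now Theorem \ref{MINMAXTEO}, applied to the Hermitian matrix $-S$, identifies the inner quantity $\min\{M_\E(-S)\colon\dim\E=k\}$ with the $k$-th smallest eigenvalue of $-S$, which we computed to be $-t_{n-k+1}$. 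Substituting gives $\max\{m_\E\colon\dim\E=k\}=t_{n-k+1}$, as claimed.

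As an alternative I could give a direct proof mirroring that of Theorem \ref{MINMAXTEO}: for any $\E$ with $\dim\E=k$ a dimension count forces $\E\cap\langle\U_1,\dots,\U_{n-k+1}\rangle\neq\0$, and a vector $\V$ there has $R(S,\V)\leq t_{n-k+1}$, whence $m_\E\leq t_{n-k+1}$; conversely the choice $\E=\langle\U_{n-k+1},\dots,\U_n\rangle$ makes $R(S,\V)\geq t_{n-k+1}$ for all $\V\in\E^*$, so $m_\E\geq t_{n-k+1}$ for that particular $\E$. Either route is essentially routine, so I do not expect a genuine obstacle; the only points demanding care are the index bookkeeping (getting $n-k+1$ rather than $k$, and keeping the eigenvalue list in increasing order after negation) and, in the direct argument, justifying the intersection step via the Grassmann dimension formula for right $\HH$-subspaces of $\HH^n$, exactly as was used in the proof of Theorem \ref{MINMAXTEO}.
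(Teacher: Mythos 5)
Your proposal is correct and takes essentially the same route as the paper: the paper's proof is the one-line observation that $t_{n-k+1}(S)=-t_k(-S)$, i.e.\ exactly your reduction of the max-min statement to Theorem \ref{MINMAXTEO} applied to the Hermitian matrix $-S$ via $R(-S,\V)=-R(S,\V)$. Your version merely spells out the sign and index bookkeeping (correctly), and your sketched direct alternative is the same dimension-count argument the paper uses for Theorem \ref{MINMAXTEO} itself, so there is nothing genuinely different here.
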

The proof is immediate if we take into account that
$  t_{n-k+1}(S)=-   t_k(-S)$.

As {particular cases}, for $k=1,n$ we have Proposition \ref{MINMAXPROP}.

\subsection{Mean value} We want to compute the mean value of the Rayleigh function over the sphere $\SF^{N-1}\subset 
\HH^n$,  where $N=4n$. 

Let $h\colon \SF^{N-1}\to \R$ be  the restriction given by $h(\V)= \V^* S \V$, $ \vert \V \vert=1$.
In order to compute the mean value  
of $h$, $$\mean{h}=\frac{1}{\vol (\SF^{N-1})}\int\nolimits_{\SF^{N-1}}\V^*S\V\, \dd \V,$$  one can consider hyper-spherical coordinates and {to undertake a long direct computation}. Another proof follows {by using} Pizzetti's formula ({\cite[Formula (11.3)]{GRAY}}). However, in order to have a similar result for the variance, we shall use  {\em moments}, as explained in 
 {Gray's book \cite[Appendix A.2]{GRAY}.}

{For any integrable function
$F=F( u_1,\dots, u_N)\colon \R^N \to \R$ we denote by $\mean{F}$ the average of $F$ over the unit sphere $\SF^{N-1}\subset \R^N$. 

{\begin{lema}\label{PARTICULAR} {\cite[Theorem 1.5]{GRAY}.}
$$\mean{ u_i^2}=\frac{1}{N},  \quad  \mean{ u_i^4}=\frac{3}{N(N+2)}, \quad  \mean{ u_i^2 u_j^2}=\frac{1}{N(N+2)} \text{\ if\ } i\neq j.$$
\end{lema}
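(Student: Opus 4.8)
The plan is to pass from the spherical average to a Gaussian integral over $\R^N$ via polar coordinates, which turns each average of a monomial into a product of elementary one-dimensional integrals. Concretely, writing $u=r\omega$ with $r=\vert u\vert$ and $\omega\in\SF^{N-1}$, any homogeneous polynomial $P$ of degree $d$ satisfies
$$\int_{\R^N} P(u)\, e^{-\vert u\vert^2/2}\, \dd u=\left(\int_0^\infty r^{d+N-1} e^{-r^2/2}\, \dd r\right)\vol(\SF^{N-1})\,\mean{P}.$$
The point is that the bracketed radial factor and $\vol(\SF^{N-1})$ depend only on $d$ and $N$, not on $P$. Hence for two homogeneous polynomials of the same degree the ratio of their full Gaussian integrals equals the ratio of their spherical averages, and each Gaussian integral factors as a product of the one-dimensional moments $m_k:=(2\pi)^{-1/2}\int_\R t^k e^{-t^2/2}\, \dd t$, for which $m_0=1$, $m_2=1$ and $m_4=3$.

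First I would dispose of $\mean{u_i^2}$ with no integral at all: by the symmetry of the sphere under permuting and reflecting coordinates all the numbers $\mean{u_i^2}$ coincide, and since $\sum_{i=1}^N u_i^2=1$ on $\SF^{N-1}$ we get $N\,\mean{u_i^2}=\mean{\sum_{i=1}^N u_i^2}=1$, so $\mean{u_i^2}=1/N$.

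For the degree-four moments I would again use symmetry to reduce to two unknowns, $a:=\mean{u_i^4}$ and $b:=\mean{u_i^2 u_j^2}$ (for $i\neq j$). Expanding $1=\mean{(\sum_{i=1}^N u_i^2)^2}$ and counting the $N$ terms of the first type and the $N(N-1)$ of the second yields the relation $Na+N(N-1)b=1$. The second relation comes from the Gaussian comparison above: the integrals attached to $u_i^4$ and to $u_i^2 u_j^2$ have the same degree $d=4$, and they factor as $m_4\prod_{\ell\neq i}m_0=3$ and $m_2^2\prod_{\ell\neq i,j}m_0=1$ respectively, so $a/b=3$. Substituting $a=3b$ into the first relation gives $N(N+2)\,b=1$, whence $b=1/(N(N+2))$ and $a=3/(N(N+2))$, as claimed.

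The only genuinely non-trivial step is the polar-coordinate factorization that produces the common normalizing constant; once its validity is granted, everything reduces to the trivial identities $m_0=m_2=1$, $m_4=3$ and elementary bookkeeping. Alternatively one could evaluate each average in closed form using $\int_0^\infty r^{d+N-1} e^{-r^2/2}\,\dd r=2^{(d+N-2)/2}\,\Gamma((d+N)/2)$ together with $\vol(\SF^{N-1})=2\pi^{N/2}/\Gamma(N/2)$, but the ratio argument has the advantage of never having to carry these constants.
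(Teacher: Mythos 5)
Your proof is correct. There is, however, nothing in the paper to compare it against: the paper does not prove Lemma \ref{PARTICULAR} at all, but simply quotes it from Gray's book \cite[Theorem 1.5]{GRAY}, so your argument fills a gap the authors deliberately left to the literature. Your route is the standard one behind the cited result, and every step checks out: the polar factorization $\int_{\R^N}P(u)\,e^{-\vert u\vert^2/2}\,\dd u = c(d,N)\,\mean{P}$ is valid for homogeneous $P$ of degree $d$ because $P(r\omega)=r^dP(\omega)$, so the constant $c(d,N)=\bigl(\int_0^\infty r^{d+N-1}e^{-r^2/2}\,\dd r\bigr)\vol(\SF^{N-1})$ indeed cancels in ratios of same-degree averages; the symmetry argument giving $\mean{u_i^2}=1/N$ from $\sum_i u_i^2=1$ is clean and avoids any integration; the one-dimensional Gaussian moments $m_0=m_2=1$, $m_4=3$ are right, yielding $a/b=3$; and the bookkeeping in $1=\mean{(\sum_i u_i^2)^2}=Na+N(N-1)b$ correctly counts $N$ diagonal terms and $N(N-1)$ ordered off-diagonal pairs, so $a=3b$ gives $N(N+2)b=1$, i.e.\ $b=1/(N(N+2))$ and $a=3/(N(N+2))$ as claimed. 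One small virtue of your presentation worth keeping is that the ratio-of-averages formulation never requires evaluating $c(d,N)$; the closed forms $2^{(d+N-2)/2}\Gamma((d+N)/2)$ and $2\pi^{N/2}/\Gamma(N/2)$ you mention as an alternative are also correct, and that variant is essentially how such moment formulas are derived in Gray's appendix.
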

}

\begin{teo}The expected value of  the Rayleigh  quotient  $R(S,\V)$ over the sphere $\SF^{4n-1}\subset \HH^n$ equals
$$\mean{h}=\frac{1}{n}\Trace S.$$
\end{teo}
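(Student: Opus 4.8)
The plan is to reduce the computation to a quadratic form in real coordinates and then to apply the moment formulas of Lemma \ref{PARTICULAR}. Since $R(S,\V/\vv\V\vv)=R(S,\V)$, I would work directly on the sphere, where $h(\V)=\V^*S\V$ with $\vv\V\vv=1$. Writing each component $v_i$ of $\V=(v_1,\dots,v_n)^T$ in terms of its four real coordinates, I identify $\HH^n$ with $\R^N$, $N=4n$, whose coordinates are the $u_1,\dots,u_N$ appearing in Lemma \ref{PARTICULAR}; in these coordinates $h$ is a homogeneous quadratic polynomial. The only facts about the sphere I actually need are that $\mean{u_k^2}=1/N$ and that $\mean{u_ku_l}=0$ for $k\neq l$, the latter following at once from the reflection symmetry $u_k\mapsto -u_k$, which preserves the sphere and its measure while changing the sign of $u_ku_l$.

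Next I would expand $\V^*S\V=\sum_{i,j}\bbar v_i\,S_{ij}\,v_j$ and split it into its diagonal and off-diagonal parts. For the diagonal part, since $S=S^*$ forces $S_{ii}\in\R$, each term is $\bbar v_i S_{ii}v_i=S_{ii}\vv v_i\vv^2$, so the diagonal contribution is $\sum_i S_{ii}\vv v_i\vv^2$. Because $\vv v_i\vv^2$ is the sum of the squares of the four real coordinates of $v_i$, Lemma \ref{PARTICULAR} gives $\mean{\vv v_i\vv^2}=4/N=1/n$, whence the diagonal part averages to $\frac{1}{n}\sum_i S_{ii}=\frac{1}{n}\Trace S$.

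It then remains to show that the off-diagonal part averages to zero, which I expect to be the only delicate point. For $i\neq j$ the real quantity $\Re(\bbar v_i S_{ij} v_j)$ is, in real coordinates, a linear combination of products $u_ku_l$ in which $u_k$ is one of the four coordinates of $v_i$ and $u_l$ one of the four coordinates of $v_j$; since $i\neq j$, the indices $k$ and $l$ are always distinct, so by the symmetry observation above every such monomial has zero mean. Hence the entire off-diagonal contribution vanishes under $\mean{\cdot}$, and combining with the previous paragraph yields $\mean{h}=\frac{1}{n}\Trace S$. The work is thus concentrated in the bookkeeping that guarantees no squared coordinate $u_k^2$ can arise from an off-diagonal term, which is exactly what the condition $i\neq j$ ensures.
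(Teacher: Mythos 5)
Your proof is correct, but it takes a genuinely different route from the paper. The paper first diagonalizes: by Proposition \ref{WEIGHT}, in an orthonormal eigenbasis $h(\V)=\sum_j t_j\vert x_j\vert^2$, a pure sum of squares of the $N=4n$ real coordinates with no cross terms, so only the moment $\mean{u_i^2}=1/N$ from Lemma \ref{PARTICULAR} is needed; this yields $\frac{1}{n}\sum_j t_j$, and the identification $\sum_j t_j=\Trace S$ is left tacit (it does hold here, since $S=U\,\diag[t_1,\dots,t_n]\,U^*$ with $U$ symplectic and $t_j$ real gives $\Trace S=\sum_j t_j\sum_i\vert U_{ij}\vert^2=\sum_j t_j$, even though the quaternionic trace is not similarity-invariant in general). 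You instead work in the standard basis with the raw entries $S_{ij}$: the diagonal terms $S_{ii}\vert v_i\vert^2$ (using $S_{ii}\in\R$) average to $\frac{1}{n}\Trace S$ directly, and the off-diagonal terms die because every monomial is $u_ku_l$ with $k\neq l$ and $\mean{u_ku_l}=0$. Two small points: that mixed moment is not among those listed in Lemma \ref{PARTICULAR}, but your reflection argument $u_k\mapsto -u_k$ supplies it correctly; and your reduction of the off-diagonal part to real parts is legitimate because $h$ is real (equivalently, $S_{ji}=\bbar{S_{ij}}$ pairs each term with its conjugate). What each approach buys: yours is more elementary and self-contained --- no spectral theorem for quaternionic Hermitian matrices, no trace--eigenvalue identity, and $\Trace S$ appears for structural reasons rather than as a sum of eigenvalues --- while the paper's diagonalization is the right investment for the subsequent variance theorem, where the fourth moments $\mean{u_i^4}$ and $\mean{u_i^2u_j^2}$ act on the same sum-of-squares expression; carrying out that second-moment computation with raw entries $S_{ij}$ in your framework would be considerably messier.
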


\begin{proof}
According to Proposition \ref{WEIGHT}, $h(\V)={\sum_{j=1}^n } t_j\vert x_j\vert^2$, where $x_j\in\HH$. Since each $x_j$ has four real coordinates, our function can be written as
$$h( u_1,\dots, u_N)= t_1( u_1^2+\cdots+ u_4^2)+\cdots+ t_n( u_{N-3}^2+\cdots+ u_N^2), \quad  N=4n,$$
where $ u_1^2+\cdots+ u_N^2=1$.

Then, by {Lemma} \ref{PARTICULAR},
	$$\mean{h} = 4 \frac{1}{N}( t_1\cdots+ t_n)=\frac{1}{n}( t_1+\cdots+ t_n),$$
{is the arithmetic mean of the eigenvalues} and the result follows.
\end{proof}
A similar computation gives us the relationship between the second central moment of $h$ and the variance {of} the eigenvalues. {We denote by 
$$\mu=\frac{1}{n}\sum_{i=1}^n t_i$$  
the mean of the eigenvalues and by 
$$\sigma^2=\frac{1}{n}\sum_{i=0}^n( t_i-\mu)^2$$ its variance.}

\begin{teo} {The second central moment of the Rayleigh quotient over the sphere {is proportional to} the variance of the eigenvalues,
$$\mean{(h-\mu)^2}=\frac{1}{2n+1}\sigma^2.$$}
\end{teo}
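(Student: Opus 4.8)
The plan is to follow the same strategy as in the computation of $\mean{h}$, but now expanding the \emph{square} of the quadratic form and invoking all three moment formulas of Lemma \ref{PARTICULAR}. Since the previous theorem gives $\mean{h}=\mu$, linearity of the mean immediately reduces the problem to a single quantity:
$$\mean{(h-\mu)^2}=\mean{h^2}-2\mu\,\mean{h}+\mu^2=\mean{h^2}-\mu^2.$$
Thus everything comes down to computing $\mean{h^2}$.

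First I would write $h=\sum_{k=1}^{N}\tau_k\, u_k^2$, where $N=4n$ and $\tau_k$ denotes the eigenvalue attached to the real coordinate $u_k$, so that each eigenvalue $t_j$ occurs exactly four times among the $\tau_k$ (one for each real component of the quaternion $x_j$). Squaring and separating the diagonal from the off-diagonal terms gives
$$h^2=\sum_{k}\tau_k^2\, u_k^4+\sum_{k\neq l}\tau_k\tau_l\, u_k^2 u_l^2,$$
and applying Lemma \ref{PARTICULAR} term by term yields
$$\mean{h^2}=\frac{3}{N(N+2)}\sum_{k}\tau_k^2+\frac{1}{N(N+2)}\sum_{k\neq l}\tau_k\tau_l.$$
The decisive bookkeeping step is to re-express these sums over the $N$ real variables in terms of the $n$ eigenvalues. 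Using the multiplicity four, one has $\sum_k\tau_k^2=4\sum_j t_j^2$, and from $\sum_{k\neq l}\tau_k\tau_l=(\sum_k\tau_k)^2-\sum_k\tau_k^2$ one obtains $\sum_{k\neq l}\tau_k\tau_l=16(\sum_j t_j)^2-4\sum_j t_j^2$. Substituting $N=4n$, so that $N(N+2)=8n(2n+1)$, collapses $\mean{h^2}$ into an expression involving only $\sum_j t_j$ and $\sum_j t_j^2$.

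To finish, I would subtract $\mu^2$ and recall that $\sigma^2=\frac{1}{n}\sum_j t_j^2-\mu^2$. Writing $A=\sum_j t_j$ and $B=\sum_j t_j^2$, the two previous steps give $\mean{h^2}=\frac{B+2A^2}{n(2n+1)}$ and $\sigma^2=\frac{nB-A^2}{n^2}$, so a short common-denominator manipulation with $\mu^2=A^2/n^2$ produces $\mean{(h-\mu)^2}=\frac{nB-A^2}{n^2(2n+1)}=\frac{1}{2n+1}\sigma^2$, as claimed. The main obstacle is purely combinatorial: correctly tracking the multiplicity four of each eigenvalue when passing between the $N=4n$ real variables and the $n$ eigenvalues, and cleanly isolating the $u_k^4$ contributions from the $u_k^2 u_l^2$ cross terms. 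Once those sums are reduced, the identification with $\sigma^2$ is elementary algebra.
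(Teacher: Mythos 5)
Your proposal is correct and follows essentially the same route as the paper: the paper's proof is just the one-line remark that the result follows from the identity $\mean{(h-\mu)^2}=\mean{h^2}-\mu^2$ (valid since the preceding theorem gives $\mean{h}=\mu$) together with Lemma \ref{PARTICULAR}, and your argument is precisely that computation carried out in full, with the multiplicity-four bookkeeping and the algebra $\mean{h^2}=\frac{B+2A^2}{n(2n+1)}$, $\sigma^2=\frac{nB-A^2}{n^2}$ all checking out. In effect you have supplied the details the paper leaves to the reader.
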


\begin{proof}
The proof follows from the well-known identity $\mean{(h-\mu)^2}=\mean{h^2}-\mu^2$,
and Lemma \ref{PARTICULAR}.
\end{proof}
}

\section{Left eigenvalues}\label{SECTLEFT}
As mentioned in Example \ref{EXRIGHT}, {the matrix $M-q I$ can be invertible} for a right eigenvalue $q$ of $M$. {This motivates the following definition.}

\begin{defi}
The  quaternion $\lambda\in \HH$ is a {\em left eigenvalue} of the  matrix $M\in\HH^{n\times n}$ if the matrix $M-\lambda I_n$ is not invertible. 
\end{defi}

{The existence of left eigenvalues for any  quaternionic matrix was proved by Wood in \cite{WOOD}.} {Notice that the left eigenvalues of a matrix are not invariant by a change of basis {\cite[Example 7.1]{ZHANG1997}.}}

If $\lambda$ is a left eigenvalue of $M$, the set of vectors $\V\in\HH^n$ such that $M\V=\lambda\V$
is a right $\HH$-vector subspace $V(\lambda)\neq \{\0\}$ of $\HH^n$.  A non-null element $\V\neq 0$ of $V(\lambda)$ is called a {\em $\lambda$-eigenvector}. By dividing it by its norm we can always assume that
 $\vv \V \vv=1$.

Clearly, if a right eigenvalue is a real number, then it is also a left eigenvalue. So {the  problem  is} to determine the non-real left eigenvalues of $M$, if any.

\subsection{$n=2$}
The case $n=2$ was completely solved by Huang and So in \cite{HUANG-SO}.
Let $M=\begin{bmatrix}a & b \cr c & d\cr\end{bmatrix}$. If  $b=0$ or $c=0$ then the left eigenvalues are $a,d\in\HH$, as it is straightforward to check by hand.
When $bc\neq 0$,  Huang and So gave explicit formulas for the left eigenvalues. In particular they proved (\cite[Theorems 2.3, {3.1 and 3.2}]{HUANG-SO}) the following result:

\begin{teo}\label{HUANGSO}
If $bc\neq 0$, 
\begin{enumerate}
\item
the left eigenvalues of $M$ are given by $\lambda=a+bx$, where $x$ is any solution of the quadratic equation 
$x^2+a_1x+a_0=0$,
with
$
a_1 = b^{-1}(a-d)$ and
$a_0 = -b^{-1}c$;
\item
the matrix $M$ has either one, two or infinite left eigenvalues; 
\item
the infinite case happens if and only if
$a_0\in\R$, $a_1\in\R$,  and
$\Delta=a_1^2-4a_0<0$;
\item
in the latter case, the left eigenvalues can be written as
\begin{equation}\label{EIGEN}
\lambda= \frac{1}{2}(a+d+b\xi),  \quad  \Re(\xi)=0, \vert \xi\vert^2 = \vert \Delta\vert.
\end{equation}
\end{enumerate}
\end{teo}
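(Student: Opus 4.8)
The plan is to separate the purely linear part of the statement, namely the reduction to a single quaternionic quadratic equation, from the analysis of that equation, which is where the trichotomy in (2)--(4) comes from.

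First I would prove (1) directly from the definition. The quaternion $\lambda$ is a left eigenvalue exactly when the homogeneous system $(a-\lambda)u+bv=0$, $cu+(d-\lambda)v=0$ admits a solution $(u,v)^T\neq\0$. Since $b\neq 0$ is invertible, the first equation forces $u\neq 0$ (otherwise $v=0$ as well), so, using that the kernel is a right $\HH$-subspace, I may right-multiply the eigenvector by $u\inv$ and assume $u=1$. The first equation then reads $bv=\lambda-a$; writing $\lambda=a+bx$ gives $v=x$, and substituting into the second equation yields $c+(d-a)x-bx^2=0$. Left-multiplying by $b\inv$ produces $x^2+a_1x+a_0=0$ with $a_1=b\inv(a-d)$ and $a_0=-b\inv c$. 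Since $\lambda\mapsto x=b\inv(\lambda-a)$ is a bijection, left eigenvalues correspond exactly to the roots of this quadratic, which is (1).

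The core of the argument is the analysis of $x^2+a_1x+a_0=0$, and the key observation is that every quaternion satisfies its own real minimal equation $x^2-tx+n=0$, where $t=2\Re(x)\in\R$ and $n=\vv x\vv^2\in\R$. Subtracting this from the eigenvalue equation cancels the quadratic term and leaves the linear relation $(a_1+t)x=n-a_0$, which splits the problem into two branches. If $a_1+t\neq 0$, then $x=(a_1+t)\inv(n-a_0)$ is completely determined by the two real scalars $t,n$; imposing the defining constraints $\Re(x)=t/2$ and $\vv x\vv^2=n$ converts the problem into a system of real polynomial equations in $(t,n)$, whose solutions I would count to obtain the one- and two-eigenvalue cases. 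The infinite case can only occur through the complementary branch $a_1+t=0$: this forces $t=-a_1$, so $a_1$ must be real, and then $n=a_0$, so $a_0$ must be real and positive. Writing $x=-a_1/2+\xi_0$ with $\Re(\xi_0)=0$, the norm condition $\vv x\vv^2=a_0$ becomes $\vv\xi_0\vv^2=a_0-a_1^2/4=-\Delta/4$, so a genuine two-sphere of solutions exists precisely when $\Delta<0$, giving (2) and (3). Finally, transporting back through $\lambda=a+bx$ and using $ba_1=a-d$ gives $\lambda=\tfrac12(a+d)+b\xi_0$ with $\vv\xi_0\vv^2=\vv\Delta\vv/4$; rescaling $\xi=2\xi_0$ yields (4).

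I expect the main obstacle to be the finite case: solving the real system in $(t,n)$ and showing that the number of roots is exactly one or two in the non-degenerate regime requires a careful discussion of the sign of $\Delta$ together with the reality of $a_0$ and $a_1$, and this delicate bookkeeping is the substantive content attributed to Huang and So. By contrast, the linear reduction of the first paragraph and the infinite-case computation are both short once the minimal-equation trick $(a_1+t)x=n-a_0$ is in hand.
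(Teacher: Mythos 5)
Your proposal cannot be compared against an internal proof, because the paper does not prove this theorem: it is stated as a quoted result of Huang and So \cite{HUANG-SO}, with the proof delegated to their Theorems 2.3, 3.1 and 3.2. What you have written is in effect a reconstruction of Huang--So's own method, and the parts you carry out are correct. Your part (1) is complete: $b\neq 0$ forces $u\neq 0$, the normalization $u=1$ is legitimate because the $\lambda$-eigenvectors form a right $\HH$-subspace, and the substitution $\lambda=a+bx$ followed by left multiplication by $-b\inv$ gives exactly $x^2+a_1x+a_0=0$ with $a_1=b\inv(a-d)$, $a_0=-b\inv c$; conversely a root $x$ yields the eigenvector $(1,x)^T$, so the correspondence is bijective. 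The branch $a_1+t=0$ of your minimal-equation trick $(a_1+t)x=n-a_0$ is also handled correctly: it forces $a_1=-t\in\R$ and $a_0=n\in\R$, the condition $\vv \xi_0\vv^2=a_0-a_1^2/4=-\Delta/4$ produces a nonempty two-sphere of roots precisely when $\Delta<0$, and the translation $\lambda=a+bx=\frac{1}{2}(a+d)+b\xi_0$ with $\xi=2\xi_0$ gives (4). This is the ``if'' half of (3) together with (4).

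The genuine gap is in (2) and the ``only if'' half of (3), and although you name it honestly, you do not close it, so as it stands the trichotomy is a plan rather than a proof. Two specific points are missing. First, your claim that the infinite case ``can only occur through the complementary branch $a_1+t=0$'' presupposes that the branch $a_1+t\neq 0$ contributes only finitely many roots. But in that branch the constraints $2\Re(x)=t$ and $\vv x\vv^2=n$ imposed on $x=(a_1+t)\inv(n-a_0)$ are two real algebraic equations in the two real unknowns $(t,n)$, and nothing you say excludes a one-parameter family of simultaneous solutions; showing that elimination leaves a nontrivial polynomial in $t$ with at most two admissible roots, and that the degenerate situation occurs exactly when $a_0,a_1\in\R$ with $\Delta<0$, is precisely the bookkeeping content of Huang--So's quadratic formulas that parts (2)--(3) rest on. Second, (2) also asserts \emph{existence} of at least one left eigenvalue, which your counting scheme does not yet deliver; you would need either the explicit Huang--So root formulas, or an appeal to the fundamental theorem of algebra for quaternionic polynomials (Eilenberg--Niven) applied to $x^2+a_1x+a_0$, or Wood's theorem \cite{WOOD}. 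In short: your reduction (1) and the infinite case (3)--(4) are proved, but the one-or-two count and the exclusion of a continuum outside the special case are exactly the substantive part you have deferred.
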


\subsection{Left eigenvalues of $2\times 2$ Hermitian matrices}\label{LEFTHERMITE}
{We apply the previous results to the Hermitian case}.
If
$S$
is a $2\times 2$ Hermitian  matrix,  the condition $S=S^*$  means that $S=\begin{bmatrix}s & b \cr b^* &  s^\prime\cr\end{bmatrix}$, where $s, s^\prime\in\R$.  If $b=0$,  the matrix $S$ has {only} two left eigenvalues, the real numbers $s, s^\prime$, {which are also right eigenvalues.}

\begin{prop}\label{MAINDOS}
{When $b\neq 0$, }
\begin{enumerate}
\item
the matrix $S$ has   two real eigenvalues (which may be different or not). They can be computed as the roots  of the real equation
\begin{equation}\label{MOOREQ}(s- t)( s^\prime- t)-\vert b\vert^2=0.
\end{equation}
\item
it has {also} non-real   left eigenvalues if and only if $\Re(b)=0$ and $s= s^\prime$. In this case, the left eigenvalues are given by the formula
\begin{equation}\label{TODOS}
\lambda=s+b\omega,  \quad  \Re(\omega)=0, \vert \omega\vert=1.
\end{equation}
\end{enumerate}

\end{prop}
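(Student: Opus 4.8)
The plan is to derive everything from the Huang--So classification, Theorem \ref{HUANGSO}, applied to $M=S$ with $a=s$, $c=b^*=\bbar b$ and $d=s^\prime$. Since $b\neq 0$ we have $bc=b\bbar b=\vv b\vv^2\neq 0$, so Theorem \ref{HUANGSO} is available, and the relevant coefficients specialize to $a_1=b\inv(s-s^\prime)$ and $a_0=-b\inv\bbar b$. The whole proof is then a matter of translating conditions on $a_0,a_1$ back into conditions on $b$ and $s,s^\prime$.

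For part (1), I would work directly from the definition: a real number $t$ is a left eigenvalue exactly when $S-tI$ fails to be invertible. Writing $S-tI=\left[\begin{smallmatrix} s-t & b\\ \bbar b & s^\prime - t\end{smallmatrix}\right]$ and using the Schur complement --- legitimate here because the pivot $s-t$ is a real scalar, hence central --- invertibility is governed by $(s^\prime-t)-\bbar b\,(s-t)\inv b=(s^\prime-t)-(s-t)\inv\vv b\vv^2$, which is real and vanishes precisely when $(s-t)(s^\prime-t)-\vv b\vv^2=0$; the remaining case $s-t=0$ gives no root since $b\neq0$. This is exactly equation \eqref{MOOREQ}, a real quadratic whose discriminant $(s-s^\prime)^2+4\vv b\vv^2$ is nonnegative, so it has two real roots. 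Since $S$ is Hermitian its right eigenvalues are real and coincide, for real values, with its left eigenvalues, so these two roots are all the real left eigenvalues.

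For part (2), the key reduction is that non-real left eigenvalues exist if and only if $S$ lies in the infinite case of Theorem \ref{HUANGSO}. Indeed, part (1) already produces two real left eigenvalues, so by the trichotomy of Theorem \ref{HUANGSO}(2) the matrix is either in the two-eigenvalue case or the infinite case; in the former all left eigenvalues coincide with the two real ones found above, leaving no room for a non-real one. Thus I must show that the criterion of Theorem \ref{HUANGSO}(3), namely $a_0,a_1\in\R$ and $\Delta=a_1^2-4a_0<0$, is equivalent to $\Re(b)=0$ and $s=s^\prime$. Here lies the main computation and the main obstacle: writing $b=\beta_0+\vec\beta$ with $\vec\beta$ the imaginary part, one computes $b\inv\bbar b=\bbar b^2/\vv b\vv^2$, so $a_0\in\R$ forces $\beta_0\vec\beta=\0$, i.e. $b\in\R$ or $\Re(b)=0$, while $a_1\in\R$ forces $(s-s^\prime)\vec\beta=\0$, i.e. $b\in\R$ or $s=s^\prime$. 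I would then dispose of the two branches: if $b\in\R$ then $a_0=-1$ and $\Delta=(s-s^\prime)^2/\vv b\vv^2+4>0$, which excludes the infinite case; if instead $\Re(b)=0$ then $a_0=1$, and the reality of $a_1$ forces $s=s^\prime$, whence $a_1=0$ and $\Delta=-4<0$. This establishes the stated equivalence, and the noncommutative bookkeeping in these reality computations is the genuinely delicate part.

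Finally, in the infinite case $a=d=s$ and $\vv\Delta\vv=4$, so substituting into the Huang--So formula \eqref{EIGEN} gives $\lambda=\tfrac12(2s+b\xi)=s+b\omega$ with $\omega=\xi/2$ satisfying $\Re(\omega)=0$ and $\vv\omega\vv=1$; this is precisely \eqref{TODOS}. Everything outside the case analysis of the previous paragraph and the Schur-complement step of part (1) is a direct specialization of Theorem \ref{HUANGSO}.
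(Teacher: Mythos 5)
Your proposal is correct and follows essentially the same route as the paper's proof: you specialize Theorem \ref{HUANGSO} with $a=s$, $c=b^*$, $d=s^\prime$, use the two real roots of \eqref{MOOREQ} to reduce part (2) to the infinite case via the trichotomy, and translate the conditions $a_0,a_1\in\R$, $\Delta<0$ into $\Re(b)=0$ and $s=s^\prime$ before substituting into \eqref{EIGEN}. Your Schur-complement verification in part (1) merely fills in what the paper dismisses as ``easy to check'', and your two-branch case analysis ($b\in\R$ versus $\Re(b)=0$) is a mild reorganization of the paper's direct derivation that $(b^*)^2$ is real and negative, so $\Re(b)=0$, with $s=s^\prime$ then forced by $a_1\in\R$.
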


\begin{proof}
1.  Notice that the discriminant of Equation \eqref{MOOREQ} is
$$\disc=(s+ s^\prime)^2-4(s s^\prime-\vert b\vert^2)=(s- s^\prime)^2+\vert b \vert^2\geq  0.$$ {It is easy to check that the two real roots  
 are  eigenvalues. }

2. Since {there are already two real eigenvalues, we only have to consider the infinite case of Theorem \ref{HUANGSO}.} If $b\neq0$, Huang-So's conditions are
\begin{align}
a_1=&b^{-1}(s- s^\prime)=\frac{b^*}{\vert b\vert^2}(s- s^\prime)\in\R,\label{A1}\\
a_0=&-b^{-1}b^*=-\frac{(b^*)^2}{\vert b \vert^2}\in\R \label{A2},\\
a_1^2-4a_0=&\frac{(b^*)^2}{\vert b \vert^2}\left[\frac{(s- s^\prime)^2}{\vert b\vert^2}+4\right]<0,\label{A3}
\end{align}
which imply $(b^*)^2\in\R$, by \eqref{A2}, and $(b^*)^2<0$,
by \eqref{A3}. 

Hence $b^2\in\R$ and $b^2<0$. 
This implies  $\Re(b)=0$, $b^*=-b$ and $b^2=-\vert b \vert^2$.   But then 
$$\Re(a_1)=\frac{s- s^\prime}{\vert b \vert^2}\Re(b^*)=0,$$
so $s- s^\prime=0$ by \eqref{A1}.

Since $a_1=0$, $a_0=1$ and $\Delta=a_1^2-4a_0=-4$,
 Formula \eqref{EIGEN} implies \eqref{TODOS}.
\end{proof}

\begin{nota}\label{SEPARO}
Notice that in the latter case, among the infinite left eigenvalues there are two real ones. {In fact, the two solutions of Equation \eqref{MOOREQ} are $t = s\pm \vv b \vv$. They
correspond to Formula \eqref{TODOS} with $\omega=\pm \frac{b}{\vert b \vert}$.}
\end{nota}

\begin{ejem}\cite[Example 2.5]{HUANG-SO} The matrix $S=\begin{bmatrix}0&1+\I\cr 1-\I&0\cr\end{bmatrix}$ has only two left eigenvalues, {$\lambda=\pm\sqrt{2}$}, which are also its right eigenvalues.
\end{ejem}

\begin{ejem}\cite[Example 5.3]{ZHANG1997} Let $S=\begin{bmatrix}0&\I\cr -\I&0\cr\end{bmatrix}$. The real eigenvalues are  $\pm 1$. The left eigenvalues are ${\lambda}=\I \,\omega$, where $\Re(\omega)=0$ and $\vert \omega\vert=1$, that is, ${\lambda} =t+y\J+z\K$, with {$t,y,z\in\R$,} $t^2+y^2+z^2=1$.
\end{ejem}

\subsection{Relationship with the Rayleigh  quotient }
The previous section gives an idea of the difficulty of computing the left eigenvalues of a given (Hermitian) matrix. {In fact, no general method is known}. In this section we shall give a new relationship between left and right eigenvalues.

Let $S$ be a Hermitian matrix.
Let $\lambda\in\HH$ be a left eigenvalue of $S$ and let $\V$ be a $\lambda$-eigenvector, with $\vv \V \vv=1$.  Then $R(S,\V)=\V^*\lambda \V$ is a real number.

Notice that {$\V^*\bbar\lambda \V=\V^*\lambda\V$} even if $\bbar \lambda\neq \lambda$.

\begin{lema}\label{REALPART}	$R(S,\V)=\Re(\lambda)$, the real part of $\lambda$.
\end{lema}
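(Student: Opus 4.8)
The plan is to exploit two facts: that $R(S,\V)$ is a real number (because $S$ is Hermitian), together with the quaternionic conjugation identity $\overline{\bbar{v}\,\lambda\, v}=\bbar{v}\,\bbar\lambda\, v$. Combining these will force the purely imaginary part of $\lambda$ to contribute nothing to the Rayleigh quotient, leaving exactly $\Re(\lambda)$.

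First I would record the starting point. Since $\V$ is a unit $\lambda$-eigenvector, we have $S\V=\lambda\V$ and $\vv \V\vv=1$, so $R(S,\V)=\V^*S\V=\V^*\lambda\V$. Because $S=S^*$, the quantity satisfies $(\V^*S\V)^*=\V^*S^*\V=\V^*S\V$, hence $R(S,\V)$ is real; this is the reality already observed just before the statement.

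Next I would compute the conjugate of $\V^*\lambda\V=\sum_i \bbar{v_i}\,\lambda\, v_i$ entrywise. Using $\overline{pq}=\bbar{q}\,\bbar{p}$ and $\overline{\bbar{v_i}}=v_i$, each term satisfies $\overline{\bbar{v_i}\,\lambda\, v_i}=\bbar{v_i}\,\bbar\lambda\, v_i$, so that $\overline{\V^*\lambda\V}=\V^*\bbar\lambda\V$. Since $R(S,\V)$ equals its own conjugate, this yields $\V^*\lambda\V=\V^*\bbar\lambda\V$ (the identity remarked just before the lemma), and therefore $\V^*(\lambda-\bbar\lambda)\V=0$.

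Finally I would decompose $\lambda=\Re(\lambda)+\lambda_0$, where $\lambda_0=\frac{1}{2}(\lambda-\bbar\lambda)$ is purely imaginary. The previous vanishing gives $\V^*\lambda_0\V=0$, while the real scalar $\Re(\lambda)$ is central in $\HH$, so $\V^*\Re(\lambda)\V=\Re(\lambda)\,\V^*\V=\Re(\lambda)$. Adding the two contributions gives $R(S,\V)=\V^*\lambda\V=\Re(\lambda)$, as claimed. The only point requiring care is the non-commutativity: one cannot simply pull $\lambda$ out of $\V^*\lambda\V$, and it is precisely the reality of the Hermitian form that compensates, annihilating the imaginary part through the conjugation identity. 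I expect this to be the conceptual crux, even though the argument itself is short.
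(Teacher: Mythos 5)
Your proof is correct, and it reaches the lemma by a route that differs from the paper's in its key algebraic mechanism. The paper argues termwise in coordinates: it writes $\V^*\lambda\V=\sum_i\bbar x_i\lambda x_i$, pushes $\Re$ inside the sum, and invokes the cyclic invariance of the real part, $\Re(pq)=\Re(qp)$, to obtain $\Re(\bbar x_i\lambda x_i)=\Re(\lambda x_i\bbar x_i)=\Re(\lambda)\vv x_i\vv^2$, which sums to $\Re(\lambda)$. You never use cyclicity: from the reality of $\V^*S\V$ you derive $\overline{\V^*\lambda\V}=\V^*\bbar\lambda\V$ (precisely the identity the paper states without proof just before the lemma), deduce $\V^*(\lambda-\bbar\lambda)\V=0$, and split $\lambda=\Re(\lambda)+\lambda_0$, so the purely imaginary part is annihilated while the real scalar, being central in $\HH$, factors out against $\V^*\V=1$. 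At bottom the two mechanisms are equivalent --- adding your entrywise conjugation identity $\overline{\bbar v_i\lambda v_i}=\bbar v_i\bbar\lambda v_i$ to the original term is exactly a conjugation-based proof of the paper's cyclicity step, via $\Re(q)=\frac{1}{2}(q+\bbar q)$ --- but your formulation buys two small things. First, after the single entrywise check it manipulates $\V^*\lambda\V$ globally, using only distributivity and centrality of $\R$ in $\HH$; in particular it is insensitive to the choice of basis, whereas the coordinate identity $\V^*\lambda\V=\sum_i\bbar x_i\lambda x_i$ is literally valid in the standard (or any real) orthonormal basis but not for a general quaternionic orthonormal one, since left multiplication by $\lambda$ does not commute with a quaternionic change of coordinates (the paper's real parts still come out right, but your version avoids the issue altogether). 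Second, your engine gives slightly more than stated: for any unit $\V$ and any $\lambda$ one gets $\Re(\V^*\lambda\V)=\frac{1}{2}\V^*(\lambda+\bbar\lambda)\V=\Re(\lambda)$ with no Hermitian hypothesis, so the argument transfers verbatim to the symplectic case in Section \ref{SECTSYMPL}, where the paper reuses this lemma with $h_A(\V)=\Re(\V^*\lambda\V)$.
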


\begin{proof}If the coordinates of $\V$ with respect to an orthonormal basis are $x_1,\dots,x_n$, then
since $\V^*S\V=\V^*\lambda\V$ is real, we have
\begin{align*}\V^*\lambda\V&=\sum \bbar x_i \lambda x_i=\Re(\sum \bbar x_i \lambda x_i)=
\sum \Re(\bbar x_i \lambda x_i)\\
&=\sum \Re(\lambda x_i\bbar x_i)=\sum \Re(\lambda\vv x_i \vv^2)=\Re(\lambda)(\sum \vv x_i\vv^2)=\Re(\lambda).  \qedhere
\end{align*}
	\end{proof}

Then,  from Proposition \ref{MINMAXPROP} it follows that
\begin{prop}\label{SIMPLE}
If $\lambda$ is a left eigenvalue {of $S$} and $\V$ is a $\lambda$-eigenvector,  then
\begin{equation}\label{MINMAXZHANG}
 t_1\leq R(S,\V)=\Re(\lambda)\leq  t_n.
\end{equation}
\end{prop}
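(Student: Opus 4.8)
The statement to prove, Proposition \ref{SIMPLE}, asserts that if $\lambda$ is a left eigenvalue of the Hermitian matrix $S$ with unit $\lambda$-eigenvector $\V$, then $t_1 \leq \Re(\lambda) \leq t_n$, where $t_1, t_n$ are the smallest and largest right eigenvalues of $S$.

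The plan is to combine the two preceding results in the excerpt in the obvious way. First I would invoke Lemma \ref{REALPART}, which already establishes the central equality $R(S,\V) = \Re(\lambda)$. This lemma does the real work: starting from $M\V = \lambda\V$ (here $M = S$), one has $S\V = \lambda\V$, so $\V^* S\V = \V^*\lambda\V$, and since the left-hand side is real (by the Hermitian property of $S$, as recalled at the start of Section \ref{SECTquotient}), the quantity $\V^*\lambda\V$ reduces to $\Re(\lambda)$ after expanding in the orthonormal eigenbasis and using $\sum_i \vv x_i\vv^2 = 1$. With the eigenvector normalized so that $\vv\V\vv = 1$, we have $R(S,\V) = \V^* S\V = \Re(\lambda)$.

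Second, I would apply Proposition \ref{MINMAXPROP}, which states that the minimum and maximum values of the Rayleigh quotient $R(S,\V)$ over $\HH^n \setminus \{\0\}$ are exactly the lowest eigenvalue $t_1$ and the highest eigenvalue $t_n$ of $S$. Since $\V$ is a particular nonzero vector, its Rayleigh quotient $R(S,\V)$ must lie between these extremal values, giving $t_1 \leq R(S,\V) \leq t_n$ immediately. Chaining this with the equality $R(S,\V) = \Re(\lambda)$ from the first step yields the desired chain $t_1 \leq \Re(\lambda) \leq t_n$, which is precisely inequality \eqref{MINMAXZHANG}.

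There is essentially no obstacle here: the proposition is a direct corollary obtained by substituting the identity of Lemma \ref{REALPART} into the bounds of Proposition \ref{MINMAXPROP}. The only point requiring mild care is to ensure the normalization $\vv\V\vv = 1$ is used consistently, so that $R(S,\V)$ equals $\V^* S\V$ without a denominator; but this is harmless since $R(S,\V/\vv\V\vv) = R(S,\V)$, as noted earlier in the paper, so the eigenvector may always be taken of unit norm. The substance of the argument lives entirely in the already-proved Lemma \ref{REALPART}, and the present statement merely records its consequence for the real part of a left eigenvalue.
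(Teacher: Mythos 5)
Your proposal is correct and follows exactly the paper's route: the paper also derives Proposition \ref{SIMPLE} immediately by combining the equality $R(S,\V)=\Re(\lambda)$ from Lemma \ref{REALPART} with the extremal bounds $t_1$ and $t_n$ from Proposition \ref{MINMAXPROP}. Your remark on normalizing $\V$ to unit length is a harmless extra detail that matches the paper's conventions.
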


Next Theorem  refines the latter formula, as an application of the min-max theorems of Section \ref{MINMAXSECT}.  {It gives a new relationship between left and right eigenvalues.}

\begin{teo}\label{HERMITMAIN} Let $\lambda$ be a left eigenvalue of the Hermitian matrix $S$, with real eigenvalues $ t_1\leq\dots\leq t_n$. If  the $\lambda$-eigenspace $V(\lambda)$ verifies 
 $\dim V(\lambda)\geq  k$ then $$ t_k \leq \Re(\lambda) \leq  t_{{n-k+1}}.$$
\end{teo}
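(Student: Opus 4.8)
The plan is to notice that the Rayleigh quotient is not merely bounded on the eigenspace $V(\lambda)$ but is in fact \emph{constant} there, equal to the single real number $\Re(\lambda)$, and then to feed a $k$-dimensional subspace of $V(\lambda)$ simultaneously into the min-max and the max-min characterizations of Section \ref{MINMAXSECT}. This constancy is exactly what lets one squeeze $\Re(\lambda)$ from each side at once.

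First I would record the underlying fact. Because $V(\lambda)$ is a right $\HH$-subspace, every nonzero $\V\in V(\lambda)$ satisfies $S\V=\lambda\V$ and is therefore a $\lambda$-eigenvector. Applying Lemma \ref{REALPART} to the unit vector $\V/\vv\V\vv$, and using the scale invariance $R(S,\V/\vv\V\vv)=R(S,\V)$ already noted in Proposition \ref{MINMAXPROP}, we obtain $R(S,\V)=\Re(\lambda)$ for \emph{every} $\V\in V(\lambda)\setminus\{\0\}$. Thus $R(S,\cdot)$ takes the constant value $\Re(\lambda)$ throughout $V(\lambda)^*$.

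Next, since by hypothesis $\dim V(\lambda)\geq k$, I would choose any $k$-dimensional $\HH$-subspace $\E\subseteq V(\lambda)$. On $\E^*$ the quotient is constantly $\Re(\lambda)$, so its maximum and minimum over $\E^*$ coincide:
$$M_\E=\max\{R(S,\V)\colon\V\in\E^*\}=\Re(\lambda)=\min\{R(S,\V)\colon\V\in\E^*\}=m_\E.$$
Applying Theorem \ref{MINMAXTEO} to this particular $\E$ gives $t_k=\min\{M_{\E'}\colon\dim\E'=k\}\leq M_\E=\Re(\lambda)$, and applying Corollary \ref{MAXMINCOR} gives $t_{n-k+1}=\max\{m_{\E'}\colon\dim\E'=k\}\geq m_\E=\Re(\lambda)$. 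Combining the two inequalities yields $t_k\leq\Re(\lambda)\leq t_{n-k+1}$, as required.

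The argument is short once the right viewpoint is fixed, so the main obstacle is conceptual rather than computational: it is recognizing that Lemma \ref{REALPART} holds for \emph{every} $\lambda$-eigenvector, hence pins the Rayleigh quotient to a single value on all of $V(\lambda)$, and that the min-max theorems range over \emph{all} $k$-dimensional subspaces, so that a subspace sitting inside $V(\lambda)$ is a legitimate competitor in both. One should also check that the degenerate case $k=1$ simply recovers Proposition \ref{SIMPLE} and that the case $\dim V(\lambda)>k$ is covered as well, but both are immediate since any $k$-dimensional subspace of $V(\lambda)$ serves the purpose.
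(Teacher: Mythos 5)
Your proof is correct and follows essentially the same route as the paper: constancy of the Rayleigh quotient on the eigenspace via Lemma \ref{REALPART}, so that $m_\E=\Re(\lambda)=M_\E$, then Theorem \ref{MINMAXTEO} and Corollary \ref{MAXMINCOR}. The only cosmetic difference is that you pick a $k$-dimensional subspace inside $V(\lambda)$ and apply the min-max theorems at dimension $k$ directly, whereas the paper takes $\E=V(\lambda)$ of dimension $j\geq k$ and finishes with the monotonicity $t_k\leq t_j$ and $t_{n-j+1}\leq t_{n-k+1}$; both are equally valid.
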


\begin{proof}
Let $\E=V(\lambda)$. By Lemma \ref{REALPART}, the Rayleigh function is constant on $\E$, so $m_\E=\Re(\lambda)=M_\E$. 
Let $\dim V(\lambda)=j\geq  k$, then, by Theorem \ref{MINMAXTEO} and Corollary \ref{MAXMINCOR}, we have 
$$ t_k \leq  t_j \leq \Re(\lambda) \leq  t_{n-j+1} \leq  t_{n-k+1}. \qedhere$$  
\end{proof}

Notice that the inequality \eqref{MINMAXZHANG} is a particular case, since $\dim V(\lambda)\geq  1$.

{\begin{nota}
Using orthonormal coordinates one can prove that 
$\vert \lambda \vert \leq \max \vert  t_i\vert$,
where the right term is the (right) spectral radius of the Hermitian matrix \cite{ZHANG2007}.
\end{nota}}

\begin{ejem}For the Hermitian matrices $S=\begin{bmatrix}s & b \cr b^* & s\cr\end{bmatrix}$  with non-real left eigenvalues {(that is, with $\Re(b)=0$, see Section \ref{LEFTHERMITE}),} we have that $\Re(\lambda)=s$ by \eqref{TODOS}, and $\vv \lambda \vv^2=s^2 + \vv b \vv^2$, while $ t_1=s-\vv b\vv$ and $ t_2=s+\vv b \vv$.
\end{ejem}

{The next Corollary shows the influence of the left eigenvalues on the right ones.}

{
    \begin{cor}\label{SIZE}Assume that the $n\times n$ Hermitian matrix $S$  has a left eigenvalue $\lambda$ such that $k=\dim V(\lambda)> \ceil{n/2}$. Then,  the right eigenvalues $ t_{n-k+1}=\dots= t_k$ have multiplicity $l_k\geq  2k-n$ and they are equal to $\Re(\lambda)$.
    \end{cor}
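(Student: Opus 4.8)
The plan is to obtain the Corollary as a direct squeezing argument on the ordered spectrum, using Theorem \ref{HERMITMAIN} as the only real input. First I would apply that theorem with the given value $k=\dim V(\lambda)$, which immediately yields the two-sided bound
$$t_k \leq \Re(\lambda) \leq t_{n-k+1}.$$

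The crucial observation is that the hypothesis $k>\ceil{n/2}$ reverses the natural order of the two indices appearing here. Indeed, $k>\ceil{n/2}\geq n/2$ gives $2k>n$, hence $n-k+1\leq k$. Since the eigenvalues are arranged non-decreasingly, $t_1\leq\dots\leq t_n$, this index inequality forces $t_{n-k+1}\leq t_k$. Chaining this with the previous display produces
$$t_k \leq \Re(\lambda) \leq t_{n-k+1} \leq t_k,$$
so every term is squeezed to a single value; in particular $t_{n-k+1}=t_{n-k+2}=\dots=t_k=\Re(\lambda)$. This settles both assertions of the statement at once: the common eigenvalue equals $\Re(\lambda)$, and all eigenvalues with indices between $n-k+1$ and $k$ coincide.

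Finally I would count the multiplicity. The indices $n-k+1,n-k+2,\dots,k$ number exactly $k-(n-k+1)+1=2k-n$, and each carries the same eigenvalue, so the multiplicity $l_k$ of that value satisfies $l_k\geq 2k-n$ (note that $2k>n$ guarantees this count is at least one, so the claim is non-vacuous). There is no analytic obstacle here; the only point requiring care is the index bookkeeping — verifying $n-k+1\leq k$ from the ceiling hypothesis and correctly tallying the run of equal eigenvalues — after which the result is immediate from monotonicity of the spectrum.
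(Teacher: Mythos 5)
Your proposal is correct and follows essentially the same route as the paper's own proof: apply Theorem \ref{HERMITMAIN} with $k=\dim V(\lambda)$, use $k>\ceil{n/2}$ to get $n-k+1\leq k$ and hence $t_{n-k+1}\leq t_k$, squeeze to force $t_{n-k+1}=\dots=t_k=\Re(\lambda)$, and count the $2k-n$ indices to bound the multiplicity. The only difference is cosmetic: you spell out the index count $k-(n-k+1)+1=2k-n$ explicitly, which the paper leaves implicit.
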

}

{
    \begin{proof}
 By Theorem \ref{HERMITMAIN}, we have $ t_k\leq \Re(\lambda)\leq  t_{n-k+1}$. Moreover, $k>\ceil{n/2}$ implies $n-k+1\leq k$, hence $ t_{n-k+1}\leq  t_k$. That means that $ t_{n-k+1}= t_k$,
    hence $\Re(\lambda)= t_j= t_k$, for all $n-k+1\leq j \leq k$. This implies that the multiplicity $l_k$ of $ t_k$ is at least $2k-n$. 
    \end{proof}
}

    \begin{ejem}Let $S$ be a Hermitian matrix of order $5$, diagonalizable to $\diag[ t_1,\dots, t_5]$. Assume that $S$ has some left eigenvalue $\lambda$ with $\dim V(\lambda)=4$. Then $\Re(\lambda)= t_4= t_3= t_2$ has at least multiplicity $3$.
    \end{ejem}

    \begin{ejem}Let $S$ be a Hermitian matrix of order $6$, diagonalizable to $\diag[ t_1,\dots, t_6]$. Assume that $S$ has some left eigenvalue $\lambda$ with $\dim V(\lambda)=4$. Then $\Re(\lambda)= t_4= t_3$ has at least multiplicity $2$.
    \end{ejem}


\section{Symplectic matrices}\label{SECTSYMPL}
In this section we extend our results to symplectic matrices {with quaternionic coefficients}.

Recall that the $n\times n$ matrix $A$ is {\em symplectic} if $A^*A=I_n$. Its right eigenvalues have norm $1$, because if {$q$ is a right eigenvalue}, $A\U=\U q$, with $\U\neq \0$, then
$$\vv \U \vv^2=\langle \U,\U\rangle=\langle A\U,A\U\rangle =\langle \U q,\U q\rangle =\bbar q \U^*\U q=\vv q\vv^2 \vv \U\vv^2,$$
so $\vv q\vv=1$. Moreover, the matrix is diagonalizable \cite[Theorem 5.3.6]{RODMAN}.

Analogously, the left eigenvalues also have norm 1, because {if $\lambda$ is a left eigenvalue,} $A\V=\lambda\V$, with $\V\neq\0$,  {then} $$\vert \V \vert^2=\langle \V,\V\rangle=\langle A\V,A\V\rangle =\langle \lambda\V,\lambda\V\rangle =\V^*\bbar \lambda \lambda \V={\vv \lambda \vv^2 \, \vv \V \vv^2,}$$
{hence $\vv \lambda \vv=1$.}

\subsection{$n=2$}For $n=2$, the authors completely characterized in \cite{MP2009} the symplectic matrices which have an infinite number of left eigenvalues. 

\begin{teo}\label{DOSPORDOS}
 The only $2\times 2$ symplectic matrices with an infinite number of left eigenvalues are those of the form
\begin{equation}\label{MATRIX}
\begin{bmatrix}
r\cos\theta & -r\sin\theta\cr
r\sin\theta &  r\cos\theta \cr
\end{bmatrix},
 \quad     \quad  r\in\HH, \vert r \vert=1,  \quad  \sin\theta\neq 0.
\end{equation}
\end{teo}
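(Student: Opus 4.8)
The plan is to reduce the $2\times 2$ symplectic case to the already-established Huang--So characterization of Theorem \ref{HUANGSO}, using the fact that a symplectic matrix has an infinite number of left eigenvalues exactly when the ``infinite case'' of that theorem applies. First I would write a general $2\times 2$ symplectic matrix $A=\begin{bmatrix}a & b\cr c& d\end{bmatrix}$ and extract the constraints imposed by $A^*A=I_2$, namely $\vert a\vert^2+\vert c\vert^2=1$, $\vert b\vert^2+\vert d\vert^2=1$, and $\bbar a b+\bbar c d=0$. The degenerate subcase $bc=0$ must be dealt with separately: if $b=0$ or $c=0$ the left eigenvalues are just the two diagonal entries, so there are only finitely many, and such matrices cannot be of the form \eqref{MATRIX} since there $\sin\theta\neq 0$ forces the off-diagonal entries to be nonzero. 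Hence the interesting regime is $bc\neq 0$, where Theorem \ref{HUANGSO} applies verbatim.

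In the regime $bc\neq 0$, by part (3) of Theorem \ref{HUANGSO} the matrix has infinitely many left eigenvalues if and only if $a_0=-b\inv c\in\R$, $a_1=b\inv(a-d)\in\R$, and $\Delta=a_1^2-4a_0<0$. The core of the argument is to show that these three algebraic conditions, combined with the symplectic relations, are equivalent to the matrix having the specific rotation-type form \eqref{MATRIX}. I would proceed by exploiting $a_0\in\R$ first: writing $c=-b a_0$ with $a_0\in\R$, and then using the norm relations to pin down the relationship between $\vert a\vert$, $\vert b\vert$, $\vert c\vert$, $\vert d\vert$. The condition $a_1\in\R$ links $a-d$ to $b$ as a real multiple, and the orthogonality relation $\bbar a b+\bbar c d=0$ should then force $d$ to be a specific quaternionic ``rotation'' of $a$. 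The goal is to extract a common unit quaternion $r$ (the would-be $r$ of \eqref{MATRIX}) and an angle $\theta$ such that $a=d=r\cos\theta$ and $-b=c=r\sin\theta$.

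The main obstacle will be the noncommutativity bookkeeping in translating the three scalar conditions into the matrix shape, since in $\HH$ expressions like $b\inv c$ and $\bbar a b$ cannot be freely reordered; I expect the delicate point to be showing that the single unit quaternion $r$ can be factored out \emph{simultaneously} from all four entries with the correct real coefficients $\cos\theta,\sin\theta$, rather than four a priori different quaternions. Here I anticipate using Lemma \ref{CONMUT} to control the purely imaginary unit quaternions that arise: once $\Delta<0$ the relevant quadratic solutions differ by a purely imaginary unit, and commutativity forced by the real structure of $a_0,a_1$ should collapse the candidate quaternions to a single $\pm r$. Finally I would verify the converse direction, which is a direct computation: for a matrix of the form \eqref{MATRIX} one checks $A^*A=I_2$ and that the Huang--So conditions $a_0,a_1\in\R$, $\Delta<0$ hold, so Theorem \ref{HUANGSO}(4) produces the infinite family of left eigenvalues $\lambda=\frac{1}{2}(a+d+b\xi)$ with $\Re(\xi)=0$, $\vert\xi\vert^2=\vert\Delta\vert$. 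Since this converse is the easier half, the real work lies in the forward implication described above.
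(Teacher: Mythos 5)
Your proposal cannot be compared against an internal proof, because the paper does not prove Theorem \ref{DOSPORDOS} at all: it is recalled from the authors' earlier work \cite{MP2009}. Judged on its own merits, your route through Theorem \ref{HUANGSO} is the natural one, and the step you flag as delicate in fact resolves more cleanly than you anticipate, so the sketch does complete. Concretely: with $c=-b\,a_0$ and $a_0\in\R$, the two normalizations $\vert a\vert^2+\vert c\vert^2=1=\vert a\vert^2+\vert b\vert^2$ give $a_0^2=1$, and $\Delta=a_1^2-4a_0<0$ then forces $a_0=1$, i.e.\ $c=-b$. Writing $a-d=\beta b$ with $\beta=a_1\in\R$ and substituting into the column orthogonality $\bbar a b+\bbar c d=0$ yields $\bbar d b-\overline{\bbar d\, b}=-\beta\vert b\vert^2$; the left-hand side is purely imaginary and the right-hand side real, so both vanish, giving $\beta=0$ (hence $a=d$ exactly, not merely ``linked'' to $b$) and $\bbar d b\in\R$. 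Thus $b$ is a \emph{real} multiple of $d$, and setting $r=d/\vert d\vert$, $\cos\theta=\vert d\vert$, with the sign of $\sin\theta$ fixed by $b=-r\sin\theta$, produces \eqref{MATRIX} (the case $d=0$ gives $\theta=\pi/2$, $r=c$ directly). In particular the simultaneous-factorization problem you worried about never arises, and Lemma \ref{CONMUT} is not needed: the single direction $r$ falls out of the reality of $\bbar d b$. Your treatment of the degenerate case $bc=0$ and the converse verification are correct as stated, so the only inaccuracies are in the anticipated mechanism, not in the architecture of the argument.
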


\begin{prop}\label{SYMPLCASE}
For the matrix  in \eqref{MATRIX}:
\begin{enumerate}
\item
{The right eigenvalues are the similarity classes of 
{$q=r(\cos\theta\pm \sin\theta\, \rho)$}, where
$\rho$ is {any of the quaternions} such that  
$\Re(\rho)=0$, $\vv \rho \vv=1$, and $r=s+t\rho$, with $s,t\in \R$.}

\item
The left eigenvalues are $\lambda=r(\cos\theta+\sin\theta\,\omega)$, where $\omega$ is an arbitrary  quaternion such that
$\Re(\omega)=0$ and $\vert \omega\vert=1$. \end{enumerate}
\end{prop}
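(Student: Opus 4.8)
The plan is to handle the two parts by different means: explicit eigenvectors for the right eigenvalues in (1), and the Huang--So formula (Theorem~\ref{HUANGSO}) for the left eigenvalues in (2). In both cases the key algebraic fact is the decomposition $r=s+t\rho$ with $\rho$ purely imaginary, $\vv\rho\vv=1$: since $\rho^2=-1$ and $r$ lies in the commutative subalgebra $\R+\R\rho\cong\C$ generated by $\rho$, one has $\rho r=r\rho$, and this is precisely the commutation relation that lets scalars cross the matrix entries.

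For part (1), I would first recall from Section~\ref{SECTPRELIM} that a $2\times 2$ quaternionic matrix has exactly two similarity classes of right eigenvalues, so it is enough to exhibit two eigenvectors. Guided by the fact that $A$ is $rI$ times a real rotation, whose complex eigenvectors are $(1,\mp\I)^T$, I would try $\U_\pm=(1,\mp\rho)^T$ and compute $A\U_\pm$ by hand. The first coordinate gives $q_\pm=r(\cos\theta\pm\sin\theta\,\rho)$, and using $\rho r=r\rho$ together with $\rho^2=-1$ the second coordinate reduces to $(\mp\rho)\,q_\pm$; hence $A\U_\pm=\U_\pm q_\pm$ and $q_\pm$ are right eigenvalues. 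By Proposition~\ref{CHANGESIMIL} their entire similarity classes consist of right eigenvalues, and since there are at most two such classes, $[q_+]$ and $[q_-]$ are all of them (they coincide exactly when $r$ is real).

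For part (2), I would apply Theorem~\ref{HUANGSO} with $a=d=r\cos\theta$, $b=-r\sin\theta$, $c=r\sin\theta$, noting $bc=-r^2\sin^2\theta\neq0$. Because $a-d=0$ we get $a_1=b\inv(a-d)=0$, and since $\sin\theta$ is real, $b\inv=-\tfrac{1}{\sin\theta}\,r\inv$, whence $a_0=-b\inv c=1$. The resulting quadratic $x^2+1=0$ has as its solution set exactly the purely imaginary unit quaternions, i.e. $x=\omega$ with $\Re(\omega)=0$, $\vv\omega\vv=1$. Then $\lambda=a+bx=r(\cos\theta-\sin\theta\,\omega)$, and replacing $\omega$ by $-\omega$, which ranges over the same sphere, gives the stated form $\lambda=r(\cos\theta+\sin\theta\,\omega)$; this also matches formula~\eqref{EIGEN}, since here $\Delta=a_1^2-4a_0=-4$.

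The computations themselves (the two eigenvector identities and $a_0=1$) are routine. The step demanding care is the non-commutative bookkeeping: each time a factor crosses $\rho$ or $r$ one must invoke $\rho r=r\rho$, $\rho^2=-1$, and the reality of $\cos\theta,\sin\theta$, and one must respect the right-hand placement of $q$ in $A\U=\U q$. The only non-computational point is the completeness claim in part~(1) --- that the two classes found are all of them --- which rests on the count of similarity classes rather than on any further calculation.
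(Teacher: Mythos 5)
Your proof is correct and takes essentially the paper's approach: part (1) is the same explicit eigenvector check (the paper uses $(\pm\rho,1)^T$ where you use $(1,\mp\rho)^T$, with the same ingredients $\rho r=r\rho$ and $\rho^2=-1$), and part (2) is the same Huang--So quadratic computation giving $a_1=0$, $a_0=1$ and $x^2+1=0$. If anything, your direct appeal to Theorem~\ref{HUANGSO} is slightly more careful than the paper's citation of Proposition~\ref{MAINDOS}, which is stated for Hermitian matrices and applies to the non-Hermitian symplectic $A$ only because the quadratic data coincide --- exactly the computation you carried out.
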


\begin{proof} {Part (1) follows from definition, by checking the eigenvectors $(\pm \rho,1)^T$, and taking into account that $r$ and $\rho$  commute, and that $\rho^2=-1$. 
}

Part (2) follows  from Proposition \ref{MAINDOS}.
{Notice that   there are two left eigenvalues which are right eigenvalues.}
\end{proof}

\subsection{Rayleigh  quotient  of a symplectic matrix}

The Rayleigh  quotient  can be defined for any non-Hermitian matrix. Let us assume that $A$ is symplectic.

\begin{defi} The Rayleigh  quotient  of $A$ is the real function 
$$R(A,\V)=\frac{\Re(\V^*A\V)}{\vv \V \vv^2}.$$
\end{defi}
As before, we shall consider its restriction $h_A$ to the sphere $S^{4n-1}\subset \HH^n$.

\begin{lema}Let $q$ be a right eigenvalue of the symplectic matrix $A$. If $\U$ is a  {$q$-eigenvector} then  $\bbar q$ is {a right eigenvalue} of $A^*$, and $\U$ is a $\bbar q$-eigenvector.
\end{lema}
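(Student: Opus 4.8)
The plan is to reduce everything to the identity $A^* = A\inv$, which holds because $A$ is symplectic, together with the fact established just above that every right eigenvalue of a symplectic matrix has norm $1$. Indeed, from $A^*A = AA^* = I_n$ we read off directly that $A^* = A\inv$, and since $\vv q \vv = 1$ we have $q\bbar q = \bbar q q = 1$, so that $\bbar q = q\inv$. These are the only two ingredients needed, and both are already available.

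With these observations in hand the computation is immediate. Starting from the hypothesis $A\U = \U q$, I would left-multiply both sides by $A^* = A\inv$ to obtain $\U = A^*(\U q)$. Because the scalar $q$ acts on the right, it factors through the matrix product, $A^*(\U q) = (A^*\U) q$, so $\U = (A^*\U)q$. Right-multiplying by $q\inv = \bbar q$ then yields $A^*\U = \U\bbar q$, which is precisely the assertion that $\bbar q$ is a right eigenvalue of $A^*$ with the same eigenvector $\U$; note that $\U \neq \0$ is preserved throughout, so nothing degenerates.

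There is essentially no obstacle here. The only points requiring a word of care are that right scalar multiplication commutes past matrix multiplication in a right $\HH$-vector space (so that $A^*(\U q) = (A^*\U)q$ really holds, which one checks componentwise), and that the passage $\bbar q = q\inv$ relies on $\vv q \vv = 1$, a property special to symplectic matrices rather than a general feature of quaternions. Both facts are routine given what has already been proved.
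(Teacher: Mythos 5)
Your proof is correct and follows essentially the same route as the paper: left-multiply $A\U=\U q$ by $A\inv=A^*$, pull the right scalar $q$ through the matrix product, and use $\vv q\vv=1$ to identify $q\inv$ with $\bbar q$. The only difference is that you spell out the routine justifications (componentwise commutation of right scalars, nondegeneracy of $\U$) that the paper leaves implicit.
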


\begin{proof}We have
$$A\U=\U q \Rightarrow \U=A\inv (\U q)=(A^*\U)q \Rightarrow \U q\inv=A^*\U.$$
Moreover $\vert q\vert=1$ implies $q\inv=\bbar q${.}
\end{proof}

Let $S=\frac{1}{2}(A+A^*)$ be the Hermitian part of $A$.

\begin{cor}\label{PASO}If $q$ is {a right eigenvalue} of $A$ and $\U$ is a $q$-eigenvector, then $\Re(q)$ is an eigenvalue of $S$, and $\U$ is an $\Re(q)$-eigenvector.
\end{cor}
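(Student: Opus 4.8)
The plan is to combine the hypothesis with the immediately preceding Lemma and then read off $S\U$ by a one-line computation. By hypothesis $A\U=\U q$, and the preceding Lemma tells me that $\U$ is simultaneously a $\bbar q$-eigenvector of $A^*$, that is, $A^*\U=\U\bbar q$. Since $S=\tfrac{1}{2}(A+A^*)$ is the Hermitian part of $A$, I would simply add these two relations to symmetrize the eigenvalue equation on the common eigenvector $\U$.

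Concretely, I would compute
$$S\U=\tfrac12(A\U+A^*\U)=\tfrac12(\U q+\U\bbar q)=\U\cdot\frac{q+\bbar q}{2}=\U\Re(q),$$
using right-distributivity of the (right) quaternionic scalar action together with the identity $q+\bbar q=2\Re(q)$. This exhibits $\U\neq\0$ as an $\Re(q)$-eigenvector of $S$, so $\Re(q)$ is an eigenvalue of $S$; being a real number, this is consistent with the fact established earlier that the right eigenvalues of a Hermitian matrix are real.

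There is essentially no obstacle here: the whole content is the observation that passing to the Hermitian part collapses the pair $q,\bbar q$ to their shared real part on the eigenvector $\U$. The only point demanding the slightest care is that in $\HH^n$ scalars act on eigenvectors on the \emph{right}, so the factorization $\U q+\U\bbar q=\U(q+\bbar q)$ must be performed on that side; since $q+\bbar q\in\R$ this is unambiguous and commutes freely, so the step is harmless.
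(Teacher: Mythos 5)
Your argument is correct and is exactly the one the paper intends: the corollary is stated right after the Lemma giving $A^*\U=\U\bbar q$, and the paper leaves implicit precisely your one-line computation $S\U=\tfrac12(\U q+\U\bbar q)=\U\,\Re(q)$. Your added remark about factoring the scalar on the right (harmless since $q+\bbar q\in\R$) is accurate and appropriately careful.
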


\begin{cor}The Rayleigh functions of $A$ and $S$ are equal,  $h_A=h_S$.
\end{cor}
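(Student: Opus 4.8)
The plan is to reduce the identity of the two Rayleigh functions to a single scalar computation. Since $h_A$ and $h_S$ are by definition the restrictions to the unit sphere of the functions $R(A,\V)$ and $R(S,\V)$, and since these two quotients carry the same denominator $\vv\V\vv^2$, it suffices to prove that their numerators agree, that is, that $\V^*S\V=\Re(\V^*A\V)$ for every nonzero $\V\in\HH^n$.

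First I would substitute $S=\frac{1}{2}(A+A^*)$ into $\V^*S\V$, obtaining $\V^*S\V=\frac{1}{2}\left(\V^*A\V+\V^*A^*\V\right)$. The key observation is that $\V^*A\V$ is a single quaternion (a $1\times 1$ product), so its conjugate transpose coincides with its quaternionic conjugate; concretely, $(\V^*A\V)^*=\V^*A^*(\V^*)^*=\V^*A^*\V$, hence $\V^*A^*\V=\bbar{\V^*A\V}$. Substituting this and using the elementary identity $\Re(q)=\frac{1}{2}(q+\bbar q)$ valid for any quaternion $q$, I obtain $\V^*S\V=\frac{1}{2}\bigl(\V^*A\V+\bbar{\V^*A\V}\bigr)=\Re(\V^*A\V)$.

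Dividing by $\vv\V\vv^2$ then gives $R(S,\V)=R(A,\V)$ on all of $\HH^n\setminus\{\0\}$, and restricting to $\vv\V\vv=1$ yields $h_S=h_A$, as claimed. There is essentially no obstacle here: the whole statement is a two-line computation, and the only point that deserves care is recognizing that conjugate transposition of the scalar $\V^*A\V$ is the same as quaternionic conjugation, which is precisely what turns the $A^*$ term into the conjugate needed to reconstruct a real part.
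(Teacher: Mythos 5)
Your proof is correct and coincides with the paper's own argument, which is the same one-line computation $h_S(\V)=\V^*S\V=\frac{1}{2}(\V^*A\V+\V^*A^*\V)=\Re(\V^*A\V)=h_A(\V)$. You merely make explicit the intermediate step the paper leaves tacit, namely that the scalar $\V^*A^*\V$ is the quaternionic conjugate $\bbar{\V^*A\V}$, so that $\Re(q)=\frac{1}{2}(q+\bbar q)$ applies.
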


\begin{proof}
We have
$$h_S(\V)=\V^*S\V=\frac{1}{2}(\V^*A\V+\V^*A^*\V)= \Re(\V^*A\V)=h_A(\V). \qedhere$$
\end{proof}

\begin{cor}\label{REALEIGEN}
\begin{enumerate}
\item
The $n$ {right} {eigenvalues}  of $S$ are the real parts {$t_j=\Re(q_j)$} of the $n$ similarity classes $[q_1],\dots,[q_n]$ of the {right} eigenvalues of $A$.
\item
The critical values of $h_A$ are $\Re(q_1),\dots,\Re(q_n)$.
\end{enumerate}
\end{cor}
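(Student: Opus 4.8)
The plan is to deduce both parts directly from the structural results already proved for the Hermitian part $S=\frac{1}{2}(A+A^*)$. First I would record that $S$ is indeed Hermitian, since $S^*=\frac{1}{2}(A^*+A)=S$; consequently $S$ has exactly $n$ real eigenvalues counted with multiplicity and admits an orthonormal basis of eigenvectors, as recalled at the start of Section \ref{SECTquotient}.

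For part (1), I would use the diagonalizability of the symplectic matrix $A$ to fix an orthonormal basis $\U_1,\dots,\U_n$ of $\HH^n$ and representatives $q_1,\dots,q_n$ of the $n$ similarity classes $[q_1],\dots,[q_n]$ with $A\U_j=\U_j q_j$ for every $j$. By Corollary \ref{PASO}, each such $\U_j$ is an $\Re(q_j)$-eigenvector of $S$, that is $S\U_j=\U_j\Re(q_j)$. Since the $\U_j$ already form an orthonormal basis, this simultaneously diagonalizes $S$ as $S=U\diag[\Re(q_1),\dots,\Re(q_n)]U^*$, where $U$ is the symplectic matrix whose columns are the $\U_j$. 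The $n$ eigenvalues of $S$ are therefore precisely $t_j=\Re(q_j)$, which is the claim.

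Part (2) is then immediate. Because the identity $h_A=h_S$ was established in the preceding corollary, the two restricted Rayleigh functions coincide as functions on the sphere $S^{4n-1}$ and hence share the same critical values. By Proposition \ref{CRITICAL} the critical values of $h_S$ are exactly the right eigenvalues $t_1,\dots,t_n$ of $S$, and by part (1) these are $\Re(q_1),\dots,\Re(q_n)$; this gives the critical values of $h_A$.

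I do not expect a real obstacle here, since every ingredient---diagonalizability of $A$, the eigenvector correspondence of Corollary \ref{PASO}, the equality $h_A=h_S$, and the critical-value computation of Proposition \ref{CRITICAL}---is already available. The only point demanding minor care is the bookkeeping of multiplicities: I must note that the $n$ eigenvectors $\U_j$ account for the entire spectrum of $S$, so that two distinct similarity classes $[q_i]\neq[q_j]$ with $\Re(q_i)=\Re(q_j)$ are correctly registered as a repeated eigenvalue of $S$ rather than signalling any breakdown of the correspondence.
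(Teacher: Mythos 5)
Your proof is correct and follows exactly the route the paper intends (the corollary is stated there without an explicit proof, as an immediate consequence of the diagonalizability of $A$, Corollary \ref{PASO}, the identity $h_A=h_S$, and Proposition \ref{CRITICAL}). Your added care about multiplicities---observing that the orthonormal eigenbasis of $A$ simultaneously diagonalizes $S$, so the $\Re(q_j)$ exhaust the spectrum of $S$ rather than merely lying in it---is a worthwhile point that the paper leaves implicit.
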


\subsection{Left eigenvalues}

Now, assume that  $\lambda$ is a left eigenvalue of the symplectic matrix $A$. {We know that  it may exist an infinite number of them}.

\begin{prop}  If $\V$ is a {$\lambda$-eigenvector}, that is, $A\V=\lambda\V$, then $h_A(\V)=\Re(\lambda)$.
\end{prop}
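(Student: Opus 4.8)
The plan is to compute $h_A(\V)=\Re(\V^*A\V)$ directly, substituting the eigenvector relation $A\V=\lambda\V$. Since $\vv\V\vv=1$, we have
$$\Re(\V^*A\V)=\Re(\V^*\lambda\V).$$
The key observation is that the quantity $\V^*\lambda\V=\sum_i \bbar x_i\,\lambda\, x_i$ (where $x_1,\dots,x_n$ are the coordinates of $\V$ in some orthonormal basis) has real part equal to $\Re(\lambda)$, exactly as was established for the Hermitian case in Lemma \ref{REALPART}. So the whole proof reduces to reusing that computation.

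First I would recall that for each coordinate $x_i\in\HH$ the trace-like identity $\Re(\bbar x_i\,\lambda\,x_i)=\Re(\lambda\, x_i\bbar x_i)=\Re(\lambda)\vv x_i\vv^2$ holds, because $\Re(pq)=\Re(qp)$ for quaternions and $x_i\bbar x_i=\vv x_i\vv^2\in\R$ commutes with $\lambda$. Summing over $i$ and using $\sum_i\vv x_i\vv^2=\vv\V\vv^2=1$ gives $\Re(\V^*\lambda\V)=\Re(\lambda)$. Since $h_A(\V)=h_S(\V)$ by the corollary identifying the two Rayleigh functions, and the argument is symmetric in not requiring $\lambda$ to be real, the conclusion $h_A(\V)=\Re(\lambda)$ follows.

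Concretely, the shortest route is to invoke the earlier corollary $h_A=h_S$ together with the already-proved Lemma \ref{REALPln}. The chain is
\begin{align*}
h_A(\V)&=\Re(\V^*A\V)=\Re(\V^*\lambda\V)\\
&=\sum_i\Re(\bbar x_i\lambda x_i)=\sum_i\Re(\lambda\vv x_i\vv^2)=\Re(\lambda)\sum_i\vv x_i\vv^2=\Re(\lambda).
\end{align*}

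There is no real obstacle here: the statement is essentially the symplectic analogue of Lemma \ref{REALPART}, and the only point that deserves care is that $\lambda$ need not be real, so one must not confuse $\V^*\lambda\V$ with $\lambda\vv\V\vv^2$; the cyclic property of the real part is what rescues the computation. I expect the proof to be a two-line invocation of $h_A=h_S$ and Lemma \ref{REALPART}, or equivalently the direct four-line quaternionic computation above.
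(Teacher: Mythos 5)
Your proof is correct and takes essentially the same route as the paper's, which itself just says the argument ``is identical to that of Lemma \ref{REALPART}'': expand $\V=\sum_j \U_j x_j$ with $\sum_j\vv x_j\vv^2=1$ and use the cyclic property $\Re(\bbar x_j\lambda x_j)=\Re(\lambda x_j\bbar x_j)=\Re(\lambda)\vv x_j\vv^2$. The only cosmetic difference is that the paper invokes normality of $A$ to take an orthonormal basis of eigenvectors, whereas your computation makes clear this is unneeded (the standard basis suffices); also note the typo in your citation of Lemma \ref{REALPART}, and that the detour through $h_A=h_S$ is harmless but unnecessary.
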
 

\begin{proof}The proof is identical to that of {Lemma}  \ref{REALPART}. Since $A$ is normal, it is diagonalizable  and we can take an orthonormal basis $\U_1\dots,\U_n$ of eigenvectors \cite[Theorem 5.3.6]{RODMAN}.
By taking  coordinates  $\V=\sum_j \U_jx_j$, we can assume that $\vert \V \vert^2=\sum_j\vv x_j\vv^2=1$, then
$$h_A(\V)=\Re(\V^*\lambda\V)=\Re(\sum_j \bbar x_j \lambda x_j)=\sum_j\Re(\lambda\vv x_j\vv^2)=\Re(\lambda). \qedhere$$

\end{proof}

\begin{nota}Since $\vv \lambda \vv=1$, notice that $A\V=\lambda\V$ implies 
$A^*(\lambda\V)=\bbar \lambda (\lambda\V)$,
hence  $\lambda\V$ is a $\bbar \lambda$-eigenvector of $A^*$, but we cannot conclude nothing about the eigenvalues of
$S$, as Example \ref{FINAL} shows.
\end{nota}

\begin{cor}\label{XX}Let $A$ be a symplectic matrix whose right eigenvalues are organized in $n$ similarity classes  $[q_1],\dots,[q_n]$, ordered in such a way that
$\Re(q_1)\leq \dots\leq \Re(q_n)$. Let  $\lambda$ be a left {eigenvalue} of $A$, such that its eingenspace verifies  $\dim V(\lambda)\geq  k$. {Then}
  $$\Re(q_k)\leq \Re(\lambda) \leq \Re(q_{n-k+1}).$$
\end{cor}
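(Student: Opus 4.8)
The plan is to reduce Corollary \ref{XX} to the Hermitian result of Theorem \ref{HERMITMAIN} by passing to the Hermitian part $S=\frac{1}{2}(A+A^*)$ of the symplectic matrix $A$. The key observation, already established in the preceding results, is that the Rayleigh function of $A$ coincides with that of $S$, and that the eigenvalues of $S$ are exactly the real parts $t_j=\Re(q_j)$ of the similarity classes of the right eigenvalues of $A$. So the right-hand objects appearing in the claimed inequality are nothing but the eigenvalues $t_k\le\dots\le t_n$ of $S$, relabelled.

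First I would set $S=\frac{1}{2}(A+A^*)$ and invoke Corollary \ref{REALEIGEN}, which tells us that the (right) eigenvalues of the Hermitian matrix $S$ are precisely $t_j=\Re(q_j)$ for $j=1,\dots,n$, and that under the given ordering $\Re(q_1)\le\dots\le\Re(q_n)$ we have $t_1\le\dots\le t_n$. Next I would show that the $\lambda$-eigenspace of $A$ as a \emph{left} eigenvalue sits inside the situation covered by the Hermitian theorem: if $\V$ is a $\lambda$-eigenvector of $A$, the preceding Proposition gives $h_A(\V)=\Re(\lambda)$, and since $h_A=h_S$ we get $h_S(\V)=R(S,\V)=\Re(\lambda)$ for every $\V\in V(\lambda)$. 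Thus the Rayleigh quotient of $S$ is constant, equal to $\Re(\lambda)$, on the whole subspace $V(\lambda)$, exactly as in the proof of Theorem \ref{HERMITMAIN}.

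The heart of the argument is then to apply the min-max characterization directly to the subspace $\E=V(\lambda)$ of dimension $j\ge k$. Because $R(S,\cdot)$ is constant on $\E^*$, we have $m_\E=M_\E=\Re(\lambda)$, and Theorem \ref{MINMAXTEO} together with Corollary \ref{MAXMINCOR} yields
$$t_k\le t_j\le M_\E=\Re(\lambda)=m_\E\le t_{n-j+1}\le t_{n-k+1}.$$
Rewriting the outer bounds in terms of the right eigenvalues of $A$ via $t_i=\Re(q_i)$ gives precisely $\Re(q_k)\le\Re(\lambda)\le\Re(q_{n-k+1})$, which is the assertion.

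The main subtlety to check — and the only place the symplectic hypothesis really enters — is that $V(\lambda)$ is genuinely a right $\HH$-subspace on which $R(S,\cdot)$ is constant, so that the min-max machinery, which was proved for arbitrary $\HH$-subspaces $\E$, legitimately applies. This is guaranteed because $A$ is normal ($A^*A=AA^*=I_n$), hence $S$ is diagonalizable with an orthonormal eigenbasis, and the left eigenspace $V(\lambda)=\{\V:A\V=\lambda\V\}$ is closed under right scalar multiplication. I expect no real obstacle beyond verifying that the ordering of the $\Re(q_j)$ is consistent with the eigenvalue ordering of $S$; once that bookkeeping is in place, the corollary is a transcription of Theorem \ref{HERMITMAIN} through the identity $h_A=h_S$.
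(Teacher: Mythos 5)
Your proposal is correct and is essentially the paper's own proof: both set $\E=V(\lambda)$, use $h_A=h_S$ and the constancy $m_\E=M_\E=\Re(\lambda)$ on $\E$, and then sandwich $\Re(\lambda)$ via Theorem \ref{MINMAXTEO}, Corollary \ref{MAXMINCOR} and Corollary \ref{REALEIGEN} (your final displayed chain even fixes a typo in the paper's version, which writes $\Re(q_{n-k-1})$ for $\Re(q_{n-k+1})$). One tiny remark on your closing paragraph: $V(\lambda)$ being a right $\HH$-subspace is automatic for any left eigenvalue of any matrix, since $A(\V q)=(\lambda\V)q=\lambda(\V q)$, so normality is not needed there; the symplectic hypothesis enters only through $h_A=h_S$ and the identification $t_j=\Re(q_j)$ of Corollary \ref{REALEIGEN}.
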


\begin{proof}
Take $\V\in V_A(\lambda)$, so  
$\Re(\lambda)=h_A(\V)=h_S(\V)$. This does not mean that   $\lambda$ is a left eigenvalue of $S$ (see Example \ref{FINAL}). But  $\Re(\lambda)=h_S(\V)$ is the constant value of $h_S$ in the subspace $\E=V_A(\lambda)$,
hence $m_\E=\Re(\lambda)=M_\E$.  If $\dim \E=j\geq  k$, then 
$$\Re(q_k)\leq \Re(q_j)\leq \Re(\lambda)\leq \Re(q_{n-j+1})\leq \Re(q_{n-k-1}),$$  from Theorem \ref{MINMAXTEO}, Corollary \ref{MAXMINCOR} {and Corollary \ref{REALEIGEN}}. \qedhere
\end{proof}

{Remember that $\vert \lambda\vert=1$.}

\begin{ejem}\label{MATRIZJ}
Let  
$
A=\frac{\sqrt{2}}{2}\J \begin{bmatrix}
1&-1\cr
1 &1\cr\end{bmatrix},
$
as in Proposition \ref{SYMPLCASE}, with $r=\J$ and $\theta=\pi/4$. We have $\rho=\J$, and the right eigenvalues are {the similarity classes of}
$$q=\cos\theta\,\J \pm \sin\theta\J^2=\frac{\sqrt{2}}{2}(\pm 1+\J).$$
whose real part is
$\Re(q)=\pm \sqrt{2}/{2}$.

On the other hand, the left eigenvalues are
$$\lambda=\cos\theta\,\J+\sin\theta\, \omega\J=\frac{\sqrt{2}}{2}(1+\omega)\J,$$ 
{where $\Re(\omega)=0$ and $\vv \omega \vv=1$}, {and their real part is
$\Re(\lambda)=\frac{\sqrt{2}}{2}\Re(\omega\J)$.}

Then, since $\vv \omega\J\vv=1$, it is true that $-1\leq \Re(\omega\J)\leq 1$,
hence  $\Re(q_1)\leq \Re(\lambda) \leq \Re(q_2)$.
\end{ejem}

\begin{ejem}\label{FINAL}
For the symplectic matrix $A$ in Example \ref{MATRIZJ}, the Hermitian part is
$S=\frac{\sqrt{2}}{2}\, \J\,\begin{bmatrix}
0&-1\cr
1 & 0\cr
\end{bmatrix}$.
Its right eigenvalues are
$q=\pm \frac{\sqrt{2}}{2}$,  
{which is the real part} of those of $A$, as stated in Corollary \ref{PASO}.

Its left eigenvalues are, by \eqref{TODOS}, 
$$\lambda=\frac{\sqrt{2}}{2}\J\omega,  \quad \Re(\omega)=1, \vv \omega\vv=1,$$
which are different from those of $A$.
\end{ejem}

{Now, we shall prove  a result, analogous to Corollary \ref{SIZE}, showing that the existence of left eigenvalues with a  high-dimensional  space of eigenvectors depends on the multiplicity of the right eigenvalues.}

\begin{cor}Let $A$ be {an} $n\times n$ symplectic matrix. Assume that there is a left eigenvalue $\lambda$  such that $k=\dim V(\lambda) >\ceil{n/2}$. Then,  {the similarity classes of  right eigenvalues $[q_{n-k+1}]=\dots=[q_k]$   have multiplicity $l_k\geq  2k-n$, and they
are equal to the similarity class $[\lambda]$ of $\lambda$.}\end{cor}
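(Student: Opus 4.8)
The plan is to imitate the proof of Corollary \ref{SIZE}, replacing Theorem \ref{HERMITMAIN} by its symplectic counterpart Corollary \ref{XX}, and then to upgrade the resulting equalities of real parts to equalities of similarity classes by exploiting the fact that all the eigenvalues in sight have norm $1$.

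First I would apply Corollary \ref{XX} with the hypothesis $\dim V(\lambda)\geq k$, obtaining
$$\Re(q_k)\leq \Re(\lambda)\leq \Re(q_{n-k+1}).$$
Next, the hypothesis $k>\ceil{n/2}$ gives $2k>n$, hence $n-k+1\leq k$. Since the classes are ordered so that $\Re(q_1)\leq\dots\leq\Re(q_n)$, this forces $\Re(q_{n-k+1})\leq \Re(q_k)$. Combining with the previous display I obtain the chain of equalities
$$\Re(q_{n-k+1})=\dots=\Re(q_k)=\Re(\lambda),$$
so all the right eigenvalues with index between $n-k+1$ and $k$ share the same real part, which moreover coincides with $\Re(\lambda)$.

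The key new step, absent in the Hermitian case where the eigenvalues were honest real numbers, is to promote this equality of real parts to an equality of similarity classes. Recall from Section \ref{SECTPRELIM} that two quaternions are similar if and only if they have the same norm and the same real part, and recall that every right eigenvalue $q_j$ of the symplectic matrix $A$ satisfies $\vv q_j\vv=1$, as does the left eigenvalue $\lambda$. Hence the quaternions $q_{n-k+1},\dots,q_k,\lambda$ all have norm $1$ and the same real part, so they are pairwise similar, and therefore
$$[q_{n-k+1}]=\dots=[q_k]=[\lambda].$$

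Finally, counting indices, the single similarity class $[q_k]$ absorbs the $k-(n-k+1)+1=2k-n$ classes with index from $n-k+1$ to $k$, so its multiplicity satisfies $l_k\geq 2k-n$, which completes the argument. The only real obstacle here is conceptual rather than computational: one must notice that the norm-$1$ property of both the left and the right eigenvalues of a symplectic matrix, combined with the similarity criterion, is precisely what converts the real-part inequalities of Corollary \ref{XX} into genuine equalities of similarity classes; the remaining bookkeeping is identical to that of Corollary \ref{SIZE}.
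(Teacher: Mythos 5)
Your proposal is correct and follows essentially the same route as the paper's own proof: apply Corollary \ref{XX}, use $k>\ceil{n/2}$ to collapse the inequalities into equalities of real parts, and then invoke the norm-$1$ property of both left and right eigenvalues of a symplectic matrix together with the similarity criterion (equal norm and equal real part) to conclude $[q_{n-k+1}]=\dots=[q_k]=[\lambda]$. You merely make explicit two points the paper leaves implicit, namely the similarity criterion from Section \ref{SECTPRELIM} and the index count $k-(n-k+1)+1=2k-n$ for the multiplicity bound.
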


\begin{proof}
Remember that the eigenvalue classes $[q_i]$ are ordered according to their real parts, as in Corollary \ref{XX}. From {that Corollary, we know that 
$$\Re(q_k)\leq \Re(\lambda) \leq \Re(q_{n-k+1}).$$
But $k>\ceil{n/2}$ implies $n-k+1\leq k$, hence $\Re(q_{n-k+1})\leq \Re(q_k)$. Then} 
$\Re(q_{n-k+1})=\dots=\Re(q_k)=\Re(\lambda)$. But since $\vv q_j\vv=1$, for all $j$, and $\vv \lambda \vv=1$,  it follows that the similarity classes are equal,  so we have
$[q_{n-k+1}]=\dots=[q_k]=[\lambda]$.
This proves the result.
\end{proof}


\section*{Funding}
The first two authors are  partially supported by the MINECO and 
FEDER research project MTM2016-78647-P.  The first author was partially supported by Xunta de Galicia ED431C 2019/10 with FEDER funds. 

\bibliographystyle{plain}
\bibliography{left_v2}

\bigskip

\end{document}